\theoremstyle{plain}
\newtheorem{thm}{Theorem}[section]
\newtheorem{lem}[thm]{Lemma}
\newtheorem{pro}[thm]{Proposition}
\theoremstyle{definition}
\newtheorem{dfn}[thm]{Definition}
\newcommand{\Z}{\mathbb{Z}}
\newcommand{\N}{\mathbb{N}}
\DeclareMathOperator{\res}{res}
\DeclareMathOperator{\Hom}{Hom}
\DeclareMathOperator{\Ima}{Im}
\DeclareMathOperator{\End}{End}
\DeclareMathOperator{\Ext}{Ext}
\newcommand{\W}{\mathcal{W}}
\DeclareMathOperator*{\rightoverright}{\parbox{2em}{\centerline{$\longrightarrow$}\vskip -6pt\centerline{$\longrightarrow$}}}
\DeclareMathOperator*{\bullop}{\parbox{1em}{\centerline{$\bullet$}}}
\begin{document}
\title[Infinite-dimensional string algebras]{Classification of modules for infinite-dimensional string algebras}

\author{William Crawley-Boevey}
\address{Department of Pure Mathematics, University of Leeds, Leeds LS2 9JT, UK}
\email{w.crawley-boevey@leeds.ac.uk}

\subjclass[2010]{Primary 16D70; Secondary 13C05}

%16  (1959-now) Associative rings and algebras [For the commutative case, see 13-XX]
%16D  (1991-now) Modules, bimodules and ideals
%16D70  (1991-now) Structure and classification (except as in 16Gxx), direct sum decomposition, cancellation
%13  (1959-now) Commutative algebra
%13C  (1973-now) Theory of modules and ideals
%13C05  (1973-now) Structure, classification theorems

\keywords{String algebra, Biserial algebra, String and Band modules, Linear Relations, Functorial Filtrations}

\thanks{This material is based upon work supported by the National Science Foundation under Grant No. 0932078 000,
while the author was in residence at the Mathematical Science Research Institute (MSRI) in Berkeley, California,
during the spring semester 2013.}

\begin{abstract}
We relax the definition of a string algebra to also include infinite-dimensional algebras such as $k[x,y]/(xy)$.
Using the functorial filtration method, which goes back to Gelfand and Ponomarev,
we show that finitely generated and artinian modules (and more generally finitely controlled and pointwise artinian modules)
are classified in terms of string and band modules.
This subsumes the known classifications of finite-dimensional modules
for string algebras and of finitely generated modules for $k[x,y]/(xy)$.
Unlike in the finite-dimensional case, the words parameterizing string modules may be infinite.
\end{abstract}
\maketitle

\section{Introduction}
By a \emph{string algebra} we mean an algebra of the form $\Lambda=kQ/(\rho)$ 
where $k$ is a field, $Q$ is a quiver, not necessarily finite, $kQ$ is the path algebra,
$\rho$ is a set of zero relations in $kQ$, that is, paths of length~$\ge 2$,
$(\rho)$ denotes the ideal generated by $\rho$, and we suppose that
\begin{itemize}
\item[(a)]Any vertex of $Q$ is the head of at most two arrows and the tail of at most two arrows, and
\item[(b)]Given any arrow $y$ in $Q$, there is at most one path $xy$ of length~2 with $xy\notin \rho$
and at most one path $yz$ of length~2 with $yz\notin \rho$.
\end{itemize}
The name is due to Butler and Ringel~\cite{BR}, but they imposed a finiteness 
condition (which we drop), which forces an algebra with a one to be finite-dimensional. 
Note that the notion has a longer history, 
going back to the special biserial algebras of Skowro\'{n}ski and Waschb\"{u}sch \cite{SW}.

We consider left $\Lambda$-modules $M$ which are
\emph{unital} in the sense that $\Lambda M=M$.
If $Q$ is finite, then $\Lambda$ has a one, and this corresponds to the usual notion.
It is equivalent that $M$ is the direct sum of its subspaces $e_v M$, where $v$ runs through the
vertices in $Q$ and $e_v$ denotes the
trivial path at vertex $v$, considered as an idempotent element in $\Lambda$.
This ensures that $\Lambda$-modules correspond to representations
of $Q$ satisfying the zero relations in $\rho$,
with the vector space at vertex $v$ being $e_v M$.

As usual, a module $M$ is \emph{finitely generated} if
$M = \Lambda m_1+\dots + \Lambda m_n$
for some elements $m_1,\dots,m_n\in M$.
Slightly more generally, if $Q$ has infinitely many vertices,
we say that a module $M$ is \emph{finitely controlled} if for every vertex $v$, the set
$e_v M$ is contained in a finitely generated submodule of~$M$.
Similarly, slightly more general than the notion of an artinian module, we say that a module $M$ is
\emph{pointwise artinian} if for any descending chain of submodules $M_1 \supseteq M_2 \supseteq M_3 \supseteq \dots$
and any vertex $v$ in $Q$, the chain of subspaces $e_v M_1 \supseteq e_v M_2 \supseteq e_v M_3 \supseteq \dots$
stabilizes.

Given a string algebra, our main results classify modules satisfying these finiteness conditions in terms 
of so-called `string' and `band' modules. The results apply in particular
to the string algebra $k[x,y]/(xy)$, which arises from the quiver with one vertex and loops $x$ and $y$ with $\rho = \{xy,yx\}$.
Also to the algebra $k\langle x,y\rangle/(x^2,y^2)$, with $\rho = \{x^2,y^2\}$.
As another example, one can take $\Gamma = kQ/(\rho)$ where $Q$ is the quiver

\medskip

\[
\cdots
\rightoverright^{x_{-1}}_{y_{-1}}
\bullop^{-1}
\rightoverright^{x_0}_{y_0}
\bullop^0
\rightoverright^{x_1}_{y_1}
\bullop^1
\rightoverright^{x_2}_{y_2}
\bullop^2
\rightoverright^{x_3}_{y_3}
\cdots
\]

\medskip
\noindent
and $\rho = \{ x_i y_{i-1} : i\in\Z\} \cup \{ y_i x_{i-1} : i\in\Z\}$. 
Clearly $\Gamma$-modules 
are the same thing as $\Z$-graded modules for $k[x,y]/(xy)$, where $x$ and $y$ have degree 1,
and finitely controlled $\Gamma$-modules
correspond to $\Z$-graded $k[x,y]/(xy)$-modules whose homogeneous components are finite-dimensional.

\subsection*{Words}
As in previous work on string algebras, in order to describe the string and band modules, we use certain `words', and as in~\cite{Ri2}, they may be infinite.
These words are also used to to define functors used in the proofs, and it is for this purpose that there are two trivial words for each vertex.
By a \emph{letter} $\ell$ one means either an arrow $x$ in $Q$ (a \emph{direct} letter)
or its formal inverse $x^{-1}$ (an \emph{inverse} letter).
The head and tail of an arrow $x$ are already defined,
and we extend them to all letters so that
the head of $x^{-1}$ is the tail of $x$ and vice versa.
If $I$ is one of the sets
$\{0,1,\dots,n\}$ with $n\ge 0$, or $\N = \{0,1,2,\dots\}$, or $-\N = \{0,-1,-2,\dots\}$ or $\Z$,
we define an \emph{$I$-word} $C$ as follows.
If $I\neq \{0\}$, then $C$ consists of a sequence of letters $C_i$ for all $i\in I$ with $i-1\in I$, so
\[
C = \begin{cases}
C_1 C_2 \dots C_n & \text{(if $I = \{0,1,\dots,n\}$)}
\\
C_1 C_2 C_3 \dots & \text{(if $I = \N$)}
\\
\dots C_{-2} C_{-1} C_0 & \text{(if $I = -\N$)}
\\
\dots C_{-1} C_0 \vert C_1 C_2 \dots & \text{(if $I = \Z$)}
\end{cases}
\]
(a bar shows the position of $C_0$ and $C_1$ if $I=\Z$) satisfying:
\begin{itemize}
\item[(a)]if $C_i$ and $C_{i+1}$ are consecutive letters,
then the tail of $C_i$ is equal to the head of $C_{i+1}$;
\item[(b)]if $C_i$ and $C_{i+1}$ are consecutive letters, then $C_i^{-1}\neq C_{i+1}$; and
\item[(c)]no zero relation $x_1\dots x_m$ in $\rho$, nor its
inverse $x_m^{-1}\dots x_1^{-1}$
occurs as a sequence of consecutive letters in $C$.
\end{itemize}
In case $I=\{0\}$ there are \emph{trivial} $I$-words $1_{v,\epsilon}$ for each vertex $v$ in $Q$ and $\epsilon=\pm 1$.
By a \emph{word}, we mean an $I$-word for some $I$;
it is a \emph{finite} word of \emph{length $n$} if $I=\{0,1,\dots,n\}$.
If $C$ is an $I$-word, then for each $i\in I$
there is associated a vertex $v_i(C)$, the tail of $C_i$ or the head of $C_{i+1}$,
or $v$ for $1_{v,\epsilon}$.
We say that a word $C$ is \emph{direct} or \emph{inverse} if every letter in $C$ is direct or inverse respectively.

The \emph{inverse} $C^{-1}$ of a word $C$ is defined by
inverting its letters (with $(x^{-1})^{-1} = x$) and reversing their order.
For example the inverse of a $\N$-word is a $(-\N)$-word, and vice versa.
By convention $(1_{v,\epsilon})^{-1} = 1_{v,-\epsilon}$,
and the inverse of a $\Z$-word is indexed so that
\[
(\dots C_0 \vert C_1 \dots)^{-1} = \dots C_1^{-1} \vert C_0^{-1} \dots.
\]
If $C$ is a $\Z$-word and $n\in \Z$, the \emph{shift}
$C[n]$ is the word $\dots C_{n} | C_{n+1} \dots$.
We say that a word $C$ is \emph{periodic}
if it is a $\Z$-word and $C = C[n]$ for some $n> 0$.
The minimal such $n$ is called the \emph{period}.
We extend the shift to $I$-words $C$ with $I\neq \Z$ by defining $C[n]=C$.
There is an equivalence relation $\sim$ on the set of all words
defined by $C\sim D$ if and only if $D=C[m]$ or $D=(C^{-1})[m]$ for some $m$.

\subsection*{Modules given by words}
Given any $I$-word $C$, we define a $\Lambda$-module $M(C)$ with basis $b_i$ ($i\in I$) as a vector space, 
and the action of $\Lambda$ given by
\[
e_v b_i =
\begin{cases}
b_i & \text{(if $v_i(C)=v$)} \\
0 & \text{(otherwise)}
\end{cases}
\]
for a trivial path $e_v$ in $\Lambda$ ($v$ a vertex in $Q$), and
\[
x b_i =
\begin{cases}
b_{i-1} & \text{(if $i-1\in I$ and $C_i = x$)} \\
b_{i+1} & \text{(if $i+1\in I$ and $C_{i+1} = x^{-1}$)} \\
0 & \text{(otherwise)}
\end{cases}
\]
for an arrow $x$ in $Q$.
For example, for the algebra $k[x,y]/(xy)$ and the word
\[
C = y^{-1}xxy^{-1}y^{-1}y^{-1}y^{-1}\dots,
\]
the module $M(C)$ for may be depicted as

\[
\setlength{\unitlength}{0.5cm}
\begin{picture}(8,5.5)
\put(0,4){\circle*{0.15}}
\put(-0.2,4.2){$b_0$}
\put(1,3){\circle*{0.15}}
\put(0.8,2.3){$b_1$}
\put(2,4){\circle*{0.15}}
\put(2.1,3.4){$b_2$}
\put(3,5){\circle*{0.15}}
\put(2.8,5.2){$b_3$}
\put(4,4){\circle*{0.15}}
\put(3.5,3.4){$b_4$}
\put(5,3){\circle*{0.15}}
\put(4.5,2.4){$b_5$}
\put(6,2){\circle*{0.15}}
\put(5.5,1.4){$b_6$}
\put(7,1){\circle*{0.15}}
\put(6.5,0.4){$b_7$}
\put(7.3,0.7){\circle*{0.05}}
\put(7.4,0.6){\circle*{0.05}}
\put(7.5,0.5){\circle*{0.05}}
\put(7.6,0.4){\circle*{0.05}}
\put(0.1,3.9){\vector(1,-1){0.8}}
\put(0,3.1){$y$}
\put(1.9,3.9){\vector(-1,-1){0.8}}
\put(1.1,3.6){$x$}
\put(2.9,4.9){\vector(-1,-1){0.8}}
\put(2.1,4.6){$x$}
\put(3.1,4.9){\vector(1,-1){0.8}}
\put(3.6,4.6){$y$}
\put(4.1,3.9){\vector(1,-1){0.8}}
\put(4.6,3.6){$y$}
\put(5.1,2.9){\vector(1,-1){0.8}}
\put(5.6,2.6){$y$}
\put(6.1,1.9){\vector(1,-1){0.8}}
\put(6.6,1.6){$y$}
\end{picture}
\]
where the arrows show the actions of $x$ and~$y$.

For any word $C$ there is an isomorphism
$i_C : M(C)\to M(C^{-1})$
given by reversing the basis, and for a $\Z$-word $C$ and
$n\in \Z$ there is an isomorphism
$t_{C,n} : M(C) \to M(C[n])$,
given by $t_{C,n}(b_i) = b_{i-n}$.
Thus modules given by equivalent words are isomorphic.

If $C$ is a periodic word of period $n$,
then $M(C)$ becomes a $\Lambda$-$k[T,T^{-1}]$-bimodule
with $T$ acting as $t_{C,n}$, and we define 
\[
M(C,V) = M(C)\otimes_{k[T,T^{-1}]} V
\]
for $V$ a $k[T,T^{-1}]$-module. 
It is clear that $M(C)$ is free over $k[T,T^{-1}]$ of rank $n$,
so $M(C,V)$ is finite dimensional if and only if $V$ is finite dimensional.
Equivalent periodic words give rise to the same modules, since for $m\in\Z$ one has
$M(C,V) \cong M(C[m],V) \cong M((C^{-1})[m],\res_\iota V)$,
where $\iota$ is the automorphism of $k[T,T^{-1}]$ exchanging $T$ and $T^{-1}$ and $\res_\iota$ denotes the restriction map via~$\iota$.

\subsection*{String and band modules}
Let $\Lambda = kQ/(\rho)$ be a string algebra.
By a \emph{string module} we mean a module $M(C)$ with $C$ a non-periodic word,
and by a \emph{band module} we mean one of the form $M(C,V)$ with $C$ a periodic word 
and $V$ an indecomposable $k[T,T^{-1}]$-module. 
By a \emph{primitive injective} band module we mean one of the form $M(C,V)$ where $C$ is a direct or inverse periodic word
and $V$ is the injective envelope of a simple $k[T,T^{-1}]$-module.

%\begin{thm}
%\label{t:props}
%String modules, finite-dimensional band modules and primitive injective band modules are indecomposable.
%There only exist isomorphisms between such modules when the corresponding words are equivalent:
%\begin{itemize}
%\item[(i)] there are no isomorphisms between string modules and modules of the form $M(C,V)$.
%\item[(ii)] For string modules, $M(C)\cong M(D)$ if and only if $C\sim D$.
%\item[(iii)] $M(C,V)\cong M(D,W)$ if and only if $D = C[m]$ and $W\cong V$ or $D = (C^{-1})[m]$ and $W\cong \res_\iota V$ for some $m$.
%\end{itemize}
%\end{thm}

%\begin{thm}
%\label{t:props}
%String modules, finite-dimensional band modules and primitive injective band modules are indecomposable.
%Moreover, there only exist isomorphisms between such modules when the corresponding words are equivalent:
%there are no isomorphisms between string modules and modules of the form $M(C,V)$;
%string modules $M(C)$ and $M(D)$ are isomorphic if and only if $C\sim D$; and $M(C,V)$ and $M(D,W)$ are isomorphic
%if and only if $D = C[m]$ and $W\cong V$ or $D = (C^{-1})[m]$ and $W\cong \res_\iota V$ for some $m$.
%\end{thm}

\begin{thm}
\label{t:props}
String modules, finite-dimensional band modules and primitive injective band modules are indecomposable.
Moreover, there only exist isomorphisms between such modules when the corresponding words are equivalent: 
there are no isomorphisms between string modules and modules of the form $M(C,V)$;
string modules $M(C)$ and $M(D)$ are isomorphic if and only if $C\sim D$; and $M(C,V)\cong M(D,W)$ 
if and only if $D = C[m]$ and $W\cong V$ or $D = (C^{-1})[m]$ and $W\cong \res_\iota V$ for some $m$.
\end{thm}

Our main result is as follows.

\begin{thm}
\label{t:main}
Every finitely controlled $\Lambda$-module is isomorphic to a direct sum
of copies of string modules and finite-dimensional band modules.
\end{thm}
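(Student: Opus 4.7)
The plan is to adapt the functorial filtration method of Gelfand--Ponomarev and Ringel, making three modifications relative to the finite-dimensional case: infinite words must be allowed, periodic (band) words need $k[T,T^{-1}]$-techniques, and the final ``primitive'' remainder is handled by Theorem~\ref{t:extension}.

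First I formalise the action of letters as linear relations on $M$, functorial in $M$. For each vertex $v$, sign $\epsilon$, and one-sided (possibly infinite) word $C$ starting with a letter of sign $\epsilon$ at $v$, I introduce a functorial subspace $F_C^+(M)\subseteq e_v M$ consisting of those $m$ which admit a trail through $M$ compatible with the letters of $C$; totally ordering these one-sided words lexicographically via the chosen signs and taking $F_C^-(M)$ to be defined analogously for the successor of $C$ gives nested families. The key computation is that for a string or band module, $\dim F_C^+(M)/F_C^-(M)$ equals the multiplicity of the summands corresponding to $C$. Functoriality transports this to an arbitrary $M$, and finite control ensures the quotients are finite-dimensional.

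Next, for each equivalence class of words under $\sim$ I choose bases of $F_C^+(M)/F_C^-(M)$ and lift their representatives to trails in $M$. For a non-periodic two-sided word $C$, combining positive and negative trails yields a submodule isomorphic to $M(C)$. For a periodic word, the shift by the period endows the multiplicity space with a $k[T,T^{-1}]$-action, and finite control forces it to be finitely generated over $k[T,T^{-1}]$; decomposing it by the structure theorem then produces band modules $M(C,V)$. The constructed submodules form a direct sum by a multiplicity count internal to the filtration. Their sum exhausts $M$ modulo a submodule $N$ on which every non-trivial word acts trivially, i.e.\ a module built only from trivial string modules $M(1_{v,\epsilon})$; Theorem~\ref{t:extension} then splits $N$ inductively, playing the role of the inductive step in a matrix-reduction argument.

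The main obstacle is the coherent definition and exact computation of $F_C^\pm$. When $C$ is infinite, $F_C^+(M)$ is an infinite intersection of subspaces defined by existence of partial trails, and one must show that existence of such trails at every finite level upgrades to an actual infinite trail in $M$. This, together with the correct identification of the multiplicities and the lifting of trails for periodic words, is where finite control plays a decisive role and forms the technical heart of the proof; without it, the filtration quotients need not be finite-dimensional nor compute the intended multiplicities, and the direct sum so constructed need not exhaust $M$.
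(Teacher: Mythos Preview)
Your overall architecture---functorial filtrations to build a direct sum of string and band submodules, then Theorem~\ref{t:extension} to handle the remainder---matches the paper. But you have misidentified what the remainder is, and consequently what Theorem~\ref{t:extension} is doing.

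After constructing the map $\theta:N\to M$ with $N$ a direct sum of strings and bands and $F_{B,D}(\theta)$ an isomorphism for all refined functors, the paper shows (Theorem~\ref{t:isoonfunctors}) that $\theta$ is injective, that $\Ima(\theta)\supseteq\tau^1(M)$, and that $M/\Ima(\theta)$ is \emph{primitive torsion}. The composition factors of this cokernel are therefore primitive simples $S=M({}^\infty P^\infty,V)$ attached to primitive cycles $P$---these are band modules, not the trivial string modules $M(1_{v,\epsilon})$. Your description of the remainder as ``a module on which every non-trivial word acts trivially, built only from trivial string modules'' is the opposite of what actually occurs: the functorial filtration already captures all nilpotent-torsion phenomena, and what escapes it is precisely the part on which primitive cycles act invertibly.

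Correspondingly, Theorem~\ref{t:extension} does not ``split $N$''. It takes a decomposed submodule $N=\bigoplus_\lambda N_\lambda$ containing $\tau^1(M)$ and an extension $0\to N\to M\to S\to 0$ by a primitive simple $S$, and shows that $M$ admits the \emph{same} decomposition type: one string summand $N_\mu\cong M(C)$ (where $C$ has a $P$-peak) is replaced by an isomorphic submodule $N_\mu'$ of $M$, and the rest are unchanged. Iterating over a filtration of the primitive-torsion cokernel by primitive simples, and noting that for each $\lambda$ the summand eventually stabilises, yields the decomposition of $M$. So the remainder is absorbed into the existing string summands rather than split off, and the content of Theorem~\ref{t:extension} is a computation in $\Ext^1(S,M(C))$ for primitive simples $S$, not a statement about vertex simples.
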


Note that string modules may be given by infinite words, but that not all such words give finitely controlled
or finitely generated modules. 
This is addressed in Section~\ref{s:main}.
For example the $k[x,y]/(xy)$-module $M(C)$, with $C$ as before, 
is finitely generated, while the $\Gamma$-module $M(D)$ with $\Gamma$ as above and
\[
D = \dots y_3 y_2 x_2^{-1} y_2 y_1 x_1^{-1} y_1 y_0 x_0^{-1} \vert x_1^{-1} x_2^{-1} x_3^{-1} \dots
\]
is finitely controlled, but not finitely generated. 
For the pointwise artinian case we prove the following---in fact the proof in this case is slightly easier.

\begin{thm}
\label{t:mainartinian}
Every pointwise artinian $\Lambda$-module is isomorphic to a direct sum of copies of string modules, finite-dimensional band modules 
and primitive injective band modules.
\end{thm}

Concerning uniqueness of the decomposition, we prove the following.

\begin{thm}[Krull-Remak-Schmidt property]
\label{t:KRS}
If a finitely controlled or pointwise artinian module
is written as a direct sum of indecomposables in two different ways,
then there is a bijection between the summands in such a way that
corresponding summands are isomorphic.
\end{thm}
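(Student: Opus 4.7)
The plan is to derive Theorem~\ref{t:KRS} from the Azumaya-Krull-Schmidt theorem (in its version for arbitrary direct sums), which delivers uniqueness of direct-sum decompositions summand-by-summand as soon as each summand has a local endomorphism ring. First, one observes that any direct summand $N$ of a finitely controlled module $M$ is itself finitely controlled: if $aM\subseteq \Lambda m_1 + \dots + \Lambda m_k$ and $\pi_N : M\to N$ denotes the projection, then $aN = \pi_N(aM) \subseteq \Lambda\pi_N(m_1)+\dots+\Lambda\pi_N(m_k)$. Consequently, given any decomposition $M = \bigoplus_j N_j$ into indecomposables, each $N_j$ is a finitely controlled indecomposable, so by Theorem~\ref{t:main} together with its indecomposability it is itself a single string or band module. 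The question therefore reduces to showing that this already-identified decomposition into string and band modules is essentially unique.

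The crux is thus to verify that every string module $M(C)$ and every band module $M(C, V)$ has a local endomorphism ring. For band modules this is automatic: they are finite-dimensional (as observed in the text) and indecomposable by Theorem~\ref{t:props}(i), so $\End(M(C, V))$ is a finite-dimensional indecomposable $k$-algebra and hence local. For string modules one must analyse $\End(M(C))$ directly, describing endomorphisms as "graph morphisms" given by embeddings of sub-words of $C$ into $C$ which respect the $\Lambda$-action on the basis $\{b_i\}$; then, paralleling the arguments used in the finite-dimensional setting of \cite{BR, Ri1}, one shows that the non-invertible endomorphisms form a two-sided ideal of $\End(M(C))$. The essential point here is that an element of $M(C)$ is, by definition, only a finite $k$-linear combination of basis vectors $b_i$; this finiteness prevents pathologies at the infinite ends of $C$ (such as two non-invertible endomorphisms summing to an automorphism, which would be possible in an appropriate completion but not in $M(C)$ itself).

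With locality in place, Theorem~\ref{t:KRS} follows immediately: given the two decompositions $M = \bigoplus_i M_i = \bigoplus_j N_j$, both already reduced above to direct sums of string and band modules with local endomorphism rings, the Azumaya-Krull-Schmidt theorem (extended to arbitrary direct sums by Crawley, J\'onsson, and Warfield) produces a bijection matching isomorphic summands. The principal obstacle in this programme is the locality verification for $\End(M(C))$ when $C$ is an infinite word: this requires a careful combinatorial analysis of the possible graph morphisms, together with control of the supports of basis coefficients, to identify the Jacobson radical explicitly and confirm that the quotient is a division ring (in fact $k$).
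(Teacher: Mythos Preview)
Your reduction to string and band modules is correct and matches the paper's first step. The gap is the claim that every string module has a local endomorphism ring: this is false already for the $\N$-word $C = (x^{-1})^\infty$ over $k[x,y]/(xy)$. Here $M(C)$ has basis $b_0,b_1,\dots$ with $x b_i = b_{i+1}$ and $y$ acting as zero, so $M(C)\cong k[x]$ as a $\Lambda$-module and $\End_\Lambda(M(C))\cong k[x]$, which is not local. Concretely, multiplication by $x$ and by $1-x$ are each non-invertible endomorphisms whose sum is the identity, so the non-invertible endomorphisms do not form an ideal --- exactly the ``two non-invertible endomorphisms summing to an automorphism'' phenomenon your heuristic claimed could not occur in $M(C)$. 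Thus the Azumaya--Krull--Schmidt theorem does not apply, and the programme as written does not go through.

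The paper sidesteps endomorphism rings entirely. After noting (as you do) that each indecomposable summand of a finitely controlled module is itself a string or band module, it invokes Theorem~\ref{t:summands}: for each non-periodic word $C=B^{-1}D$ in $\Sigma$ the multiplicity of $M(C)$ in any such decomposition equals $\dim F_{B,D}(M)$, and for each periodic $C=B^{-1}D$ the direct sum $\bigoplus_t V_t$ of the parameters of the band summands of type $C$ is isomorphic to $F_{B,D}(M)$ as a $k[T,T^{-1}]$-module. Since the $F_{B,D}(M)$ depend only on $M$, the string summands are determined outright, and the band summands are determined via ordinary Krull--Remak--Schmidt for finite-dimensional $k[T,T^{-1}]$-modules.
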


Theorem \ref{t:mainartinian} is proved in \S\ref{s:cover}, and the others are proved in \S\ref{s:main}.
Our results extend existing work on the classification of finite-dimensional modules 
for string algebras (or related special biserial algebras) due 
to several authors \cite{GP,Ri1,DF,WW,BR}. 
These authors used the so-called functorial filtration method,
which relies on certain functorially-defined filtrations of modules.
The original work of Gelfand and Ponomarev \cite{GP}
applied to $k[x,y]/(xy)$,
and Ringel \cite{Ri1}, in what is probably the best reference for the method,
adapted it to $k\langle x,y\rangle/(x^2,y^2)$.
We modify the method so that it works for infinite-dimensional modules. 
In particular we change the definition of $C''$ for a relation $C$ in Definition~\ref{d:cprimecdprime}
and prove a Splitting Lemma, \ref{l:splitting};
we consider functors $C^\pm$ for $C$ an $\N$-word in \S\ref{s:functfilt};
we prove finite dimensionality results for refined functors in~\S\ref{s:refined}; 
we prove our Realization Lemma~\ref{l:realize} and covering properties in~\S\ref{s:cover}. 
Finally we use our Extension Theorem~\ref{t:extension} to overcome a limitation of the functorial filtration method.

Our results include the classification of finitely generated $k[x,y]/(xy)$-modules.
The possibility of such a classification is hinted at in a footnote in \cite{NRSB} (on page 652 of the English translation),
was worked out by Levy \cite{L} more generally for Dedekind-like rings, 
and again discussed by Laubenbacher and Sturmfels~\cite{LS}.
These authors all used a different method,
sometimes called `matrix reductions'.
The functorial filtration method is essentially different,
although the last part of our proof of Theorem~\ref{t:main}, 
using the Extension Theorem~\ref{t:extension}, is reminiscent of matrix reductions.
Our proof offers new insight even for the algebra $k[x,y]/(xy)$;
for example Theorem~\ref{t:summands} identifies the summands of a finitely generated module.
As discussed above, our results also give a classification
of graded modules for this algebra with finite-dimensional
homogeneous components, where $x$ and $y$ have degree 1; this appears to be new.
The same ideas would work for any grading.

Instead of a string algebra, one can
consider its localization or completion with respect to
the ideal generated by the arrows.
Algebras of this type have
occasionally been studied by matrix reductions.
For example Burban and Drozd~\cite{BD} study
the derived category for certain
`nodal' algebras, including
$k\langle\!\langle x,y\rangle\!\rangle/(x^2,y^2)$.
The functorial filtration method should adapt to
classify finitely generated modules for such
localizations and completions.
Note that Theorem~\ref{t:extension} would no longer
be necessary in this case, as there would be no
primitive simples.

\section{More about words}
\label{s:words}
We introduce some more constructions which will be needed later.
Let $\Lambda = kQ/(\rho)$ be a string algebra.
We choose a \emph{sign} $\epsilon = \pm 1$ for each letter $\ell$,
such that if distinct letters $\ell$ and $\ell'$ have the same head and sign,
then $\{\ell,\ell'\} = \{x^{-1},y\}$ for some zero relation $xy\in \rho$.
(This is equivalent to the use of $\sigma$ and $\epsilon$ in \cite{BR}.)
Note that if $C_i$ and $C_{i+1}$ are consecutive letters in a word, then $C_i^{-1}$ and $C_{i+1}$ have opposite signs.

The \emph{head} of a finite word or $\N$-word $C$ is defined to be $v_0(C)$, so it is the head of $C_1$, or $v$ for $C = 1_{v,\epsilon}$.
The \emph{sign} of a finite word or $\N$-word $C$ is defined to be that of $C_1$, or $\epsilon$ for $C = 1_{v,\epsilon}$.
The \emph{tail} is defined for a word $C$ of length $n$ to be $v_n(C)$ and for $C$ a $(-\N)$-word to be $v_0(C)$.

The \emph{composition} $CD$ of a word $C$ and a word $D$ is
obtained by concatenating the sequences of letters, provided that
the tail of $C$ is equal to the head of $D$,
the words $C^{-1}$ and $D$ have opposite signs, and
the result is a word.
By convention $1_{v,\epsilon} 1_{v,\epsilon} = 1_{v,\epsilon}$
and the composition of a $(-\N)$-word $C$ and an $\N$-word $D$ is indexed so that
\[
CD = \dots C_{-1} C_0 \vert D_1 D_2 \dots.
\]
If $C = C_1 C_2 \dots C_n$ is a non-trivial finite word and all powers $C^m$ are words,
we write $C^\infty$ and ${}^\infty C^\infty$ for the $\N$-word and periodic word
\[
C_1 \dots C_n C_1 \dots C_n C_1 \dots
\quad\text{and}\quad
\dots C_1 \dots C_n \vert C_1 \dots C_n C_1 \dots.
\]
If $C$ is an $I$-word and $i\in I$, there are words
\[
C_{>i} = C_{i+1} C_{i+2} \dots
\quad\text{and}\quad
C_{\le i} = \dots C_{i-1} C_i
\]
with appropriate conventions if $i$ is maximal or minimal in $I$, such that
\[
C = (C_{\le i} C_{>i})[-i].
\]

We say that a word $C$ is \emph{repeating} if $C = D^\infty$ for some non-trivial finite word $D$.
We say that a word $C$ is \emph{eventually repeating} (respectively \emph{direct},  respectively \emph{inverse}) 
if $C_{>i}$ is repeating (respectively direct, respectively inverse) for some $i$.
We say that an $I$-word $C$ is \emph{right vertex-finite} if for each vertex $v$ there are only finitely many $i>0$ in $I$ with $v_i(C)=v$.

\begin{lem}
\label{l:symmetry}
No word can be equal to a shift of its inverse.
\end{lem}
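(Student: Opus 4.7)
\emph{Proof plan.} I argue by cases on the index set $I$ of $C$, in each case deriving a contradiction from one of two elementary facts: (i)~no letter $\ell$ satisfies $\ell=\ell^{-1}$, and (ii)~word condition (b) forbids $C_{i+1}=C_i^{-1}$ for any consecutive pair of letters.

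Since the shift $C[n]$ equals $C$ whenever $I\neq\Z$, the condition $C=C^{-1}[n]$ reduces in every non-$\Z$ case to $C=C^{-1}$. If $C=1_{v,\epsilon}$ is trivial, this fails by the sign convention $(1_{v,\epsilon})^{-1}=1_{v,-\epsilon}$. If $C$ is an $\N$-word then $C^{-1}$ is a $(-\N)$-word and the equality fails at the level of index sets; the $(-\N)$-case is symmetric. If $C=C_1\dots C_n$ is non-trivial finite, then $C=C^{-1}$ demands $C_i=C_{n+1-i}^{-1}$ for every $i$; choosing $i=(n+1)/2$ when $n$ is odd contradicts (i), and choosing $i=n/2$ when $n$ is even contradicts (ii).

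The only case requiring a little bookkeeping is $I=\Z$. Using the convention $(\dots C_0\mid C_1\dots)^{-1}=\dots C_1^{-1}\mid C_0^{-1}\dots$ together with $(D[n])_i=D_{n+i}$, one computes $(C^{-1}[n])_i=C_{1-n-i}^{-1}$, so that $C=C^{-1}[n]$ amounts to the identity $C_i=C_{m-i}^{-1}$ for all $i\in\Z$, where $m:=1-n$. If $m$ is even, take $i=m/2$ and obtain $C_{m/2}=C_{m/2}^{-1}$, contradicting (i); if $m$ is odd, take $i=(m-1)/2$ and obtain consecutive letters $C_i$, $C_{i+1}$ with $C_{i+1}=C_i^{-1}$, contradicting (ii). The main (very minor) obstacle is simply getting the interlocked inverse and shift conventions straight; once the identity $C_i=C_{m-i}^{-1}$ has been extracted, the parity dichotomy closes the argument.
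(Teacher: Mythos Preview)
Your proof is correct and follows essentially the same approach as the paper: reduce to a symmetry identity $C_i^{-1}=C_{n+1-i}$ (or, in your indexing, $C_i=C_{m-i}^{-1}$) and dispose of it by a parity argument using word conditions (i) and (ii). You are simply more explicit than the paper about the trivial, $\N$-, and $(-\N)$-word cases, which the paper leaves implicit.
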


\begin{proof}
If $C$ is finite of length $n$, then $C=C^{-1}$
implies $C_i^{-1} = C_{n+1-i}$ for all $i$.
The same holds if $C$ is a $\Z$-word and $C = C^{-1}[-n]$.
Now if $n$ is even, then $C_i^{-1}=C_{i+1}$ for $i=n/2$,
which is impossible,
and if $n$ is odd, then $C_i^{-1} = C_i$ for $i=(n+1)/2$,
which is also impossible.
\end{proof}

\section{Primitive cycles and $k[z]$-module structure}
By a \emph{primitive cycle} $P$ we mean a non-trivial finite direct word
(so a non-trivial path in $Q$ which is non-zero in $\Lambda$)
such that ${}^\infty P^\infty$ is a periodic word of period equal to the length of $P$.
Equivalently $P$ is not itself a power of another word, and every power of $P$ is a word.
For example the primitive cycles for $k[x,y]/(xy)$ are $x$ and $y$; for $k\langle x,y\rangle/(x^2,y^2)$
they are $xy$ and $yx$; the algebra $\Gamma$ in the introduction has no primitive cycles.

A non-trivial finite direct word is uniquely determined by its first arrow and length, so there are
at most two primitive cycles with any given head~$v$.
Moreover if $P$ and $R$ are distinct primitive cycles with head $v$ the string algebra condition imples that $PR=RP=0$ in $\Lambda$. 
For any vertex $v$ we define $z_v \in e_v \Lambda e_v$ to be the sum of all primitive cycles with head $v$.
If $z_v = P+R$, then $z_v^n = P^n + R^n$ and, for example, $z_v^n P = P^{n+1}$.

Let $k[z]$ denote the polynomial ring in an indeterminate $z$. We turn any $\Lambda$-module $M$ (including $\Lambda$ itself) 
into a $k[z]$-module
by defining $z m = z_v m$ for $m \in e_v M$. The following lemma shows that this turns $\Lambda$ into a $k[z]$-algebra.

\begin{lem}
The actions of $k[z]$ and $\Lambda$ on $M$ commute.
\end{lem}

\begin{proof}
If $a$ is an arrow with head $v$ and tail $u$ then $z_v a = a z_u$, for
$z_v a$ is either zero, or it is a word of the form $P a$ where $P$ is a primitive
cycle whose first letter is $a$. Then $P a = a R$ where $R$ is a primitive cycle
at $u$, so $a R = a z_u$.
\end{proof}

\begin{lem}
$e_v \Lambda e_u$ is a finitely generated $k[z]$-module for all vertices $u,v$.
\end{lem}

\begin{proof}
Consider non-trivial paths from $u$ to $v$ in $Q$
which are non-zero in $\Lambda$.
They correspond to finite direct words $C$ with head $v$ and tail $u$.
By the string algebra condition
all such words with the same sign must
be of the form $D, PD, P^2D, \dots$ for some non-trivial words $D$ and $P$.
If there are infinitely many such words, then $P$ is a primitive cycle,
and these words are equal in $e_v \Lambda e_u$ to $D, z_v D, z_v ^2 D, \dots$.
Thus $e_v \Lambda e_u$ is a finitely generated $k[z]$-module.
\end{proof}

\begin{lem}
\label{l:kzfc}
For a $\Lambda$-module $M$, the following are equivalent.
\begin{itemize}
\item[(i)]
$M$ is finitely controlled.
\item[(ii)]
$M$ is \emph{pointwise noetherian}, meaning that for any ascending chain of submodules $M_1 \subseteq M_2 \subseteq M_3 \subseteq \dots$
and any vertex $v$ in $Q$, the chain of subspaces $e_v M_1 \subseteq e_v M_2 \subseteq e_v M_3 \subseteq \dots$ stabilizes.
\item[(iii)]
$e_v M$ is a finitely generated $e_v \Lambda e_v$-module for every vertex $v$ in $Q$.
\item[(iv)]
$e_v M$ is a finitely generated $k[z]$-module for every vertex $v$ in $Q$.
\end{itemize}
\end{lem}

\begin{proof}
(iv) $\Rightarrow$ (ii) $\Rightarrow$ (iii) $\Rightarrow$ (i) are straightforward. 
For (i) $\Rightarrow$ (iv), suppose that $M$ is finitely controlled.
Then $e_v M$ is contained in a finitely generated submodule $\sum_{i=1}^k \Lambda m_i$.
We may assume that each $m_i$ belongs to $e_{v_i} M$ for some $v_i$.
Then $e_v M$ is contained in a $k[z]$-submodule of $M$
which is isomorphic to a quotient of $\sum_{i=1}^k e_v \Lambda e_{v_i}$,
so is finitely generated as a $k[z]$-module.
\end{proof}

\begin{lem}
\label{l:kzpa}
For a $\Lambda$-module $M$, the following are equivalent.
\begin{itemize}
\item[(i)]
$M$ is  is pointwise artinian 
\item[(ii)]
$e_v M$ is an artinian $e_v \Lambda e_v$-module for every vertex $v$ in $Q$.
\item[(iii)]
$e_v M$ is an artinian $k[z]$-module for every vertex $v$ in $Q$.
\end{itemize}
\end{lem}

\begin{proof}
(iii) $\Rightarrow$ (i) $\Rightarrow$ (ii) are straightforward. 
For (ii) $\Rightarrow$ (iii),
since $e_v \Lambda e_v$ is a finitely generated $k[z]$-module, it is noetherian and its simple modules are finite dimensional.
Thus a finitely generated $e_v \Lambda e_v$-submodule of $e_v M$ is both noetherian and artinian, so
finite length, hence finite dimensional. It follows that $e_v M$ is locally finite-dimensional as a $k[z]$-module.

If $e_v \Lambda e_v$ is generated as a $k[z]$-module by $n$ elements, then there is a $k[z]$-module
map from $k[z]^n$ onto $e_v \Lambda e_v$. 
If $S$ is a simple $k[z]$-submodule of $e_v M$, tensoring with $S$, we get a map
from $S^n$ onto $e_v \Lambda e_v \otimes_{k[z]} S$, and so onto $(e_v \Lambda e_v)S$.
Thus $(e_v \Lambda e_v)S$ has length at most $n$ as a $k[z]$-module, so also as a $e_v \Lambda e_v$-module,
and hence $(e_v \Lambda e_v)S$ is contained in the $n$-th term in the socle series of $e_v M$ as an $e_v \Lambda e_v$-module.
It follows that the $k[z]$-socle of $e_v M$ is contained in the $n$-th term in the socle series of $e_v M$ as an $e_v \Lambda e_v$-module,
and as $e_v M$ is artinian as an $e_v \Lambda e_v$-module, the modules in the socle series have finite length, so they are finite dimensional.
Thus the $k[z]$-socle of $e_v M$ is finite dimensional. 

Now (iii) follows from the following characterization: 
a $k[z]$-module is artinian if and only if it is locally finite-dimensional and has finite-dimensional socle.
This follows from the fact that the injective envelopes of simple $k[z]$-modules are artinian.
\end{proof}

\section{Linear Relations}
\label{s:relations}
In this section we generalize known results about linear relations to
the infinite-dimensional case.
Let $V$ and $W$ be vector spaces.
Recall that a \emph{linear relation from $V$ to $W$} (or \emph{on $V$} if $V=W$) is a subspace
$C$ of $V\oplus W$, for example the graph of a linear map $f:V\to W$.
If $C$ is a linear relation from $V$ to $W$, $v\in V$ and $H\subseteq V$
we define
\[
Cv = \{ w\in W : (v,w)\in C \}
\quad\text{and}\quad
C H = \bigcup_{v\in H} Cv,
\]
and in this way we can think of $C$ as a mapping from elements of $V$
(or subsets of $V$) to subsets of $W$.
If $D$ is a linear relation from $U$ to $V$ then $CD$
is the linear relation from $U$ to $W$ given by
\[
CD = \{ (u,w) : \text{$\exists$ $v\in V$ with $w\in Cv$ and $v\in Du$}\}.
\]
We write $C^{-1}$ for the linear relation from $W$ to $V$ given by
\[
C^{-1} = \{ (w,v) : (v,w)\in C\},
\]
and hence we can define powers $C^n$
for all $n\in \Z$.

If $M$ is a $\Lambda$-module and $x$ is an arrow with head $v$ and tail $u$, then multiplication by
$x$ defines a linear map $e_u M \to e_v M$, and hence a linear relation from $e_u M$
to $e_v M$. 
By composing such relations and their inverses, any finite word
$C$ defines a linear relation from $e_u M$ to $e_v M$, where $v$ is the head of $C$
and $u$ is the tail of $C$. We denote this relation also by $C$.
Thus, for any subspace $U$ of $e_u M$, one obtains a subspace $C U$ of $e_v M$.
We write $C 0$ for the case $U = \{0\}$ and $C M$ for the case $U = e_u M$.
(The last makes sense if we consider $C$ as a linear relation from $M$ to itself).

\begin{dfn}
\label{d:cprimecdprime}
If $C$ is a linear relation on a vector space $V$, we define subspaces
$C'\subseteq C'' \subseteq V$ by
\begin{align*}
C'' &= \{ v\in V : \text{$\exists v_0,v_1,v_2,\dots$ with $v=v_0$ and $v_n\in C v_{n+1} \forall n$} \}, \text{ and}
\\
C' &= \bigcup_{n\ge 0} C^n 0.
\end{align*}
\end{dfn}

The first of these differs from the definition used previously,
for example in \cite{Ri1}, but that work only
involved relations on finite-dimensional vector spaces,
for which the two definitions agree:

\begin{lem}
If $C$ is a linear relation on $V$ then
\[
C'' \subseteq \bigcap_{n\ge 0} C^n V
\]
with equality if $V$ is finite-dimensional.
\end{lem}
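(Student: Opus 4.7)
The plan is to prove the two assertions separately. The forward inclusion holds in full generality, while equality requires finite-dimensionality and rests on the Noetherian property of the subspace lattice of $V$.

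For $C'' \subseteq \bigcap_{n\ge 0}C^nV$, I take $v\in C''$ together with a witnessing sequence $v_0=v, v_1, v_2, \dots$ satisfying $v_n\in Cv_{n+1}$ for every $n\ge 0$, and show by induction on $n$ that $v\in C^nv_n\subseteq C^nV$. The base case $n=0$ is trivial. For the inductive step, $v_n\in Cv_{n+1}$ gives $C^nv_n\subseteq C^n(Cv_{n+1})=C^{n+1}v_{n+1}$, so $v\in C^{n+1}v_{n+1}\subseteq C^{n+1}V$.

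For the reverse inclusion when $\dim V<\infty$, I first observe that $V\supseteq CV\supseteq C^2V\supseteq\cdots$ is a descending chain of subspaces of $V$: each $C^nV$ is a subspace because $C$ is, the containment $CV\subseteq V$ is immediate from $C\subseteq V\oplus V$, and applying $C$ to $C^nV\subseteq C^{n-1}V$ yields $C^{n+1}V\subseteq C^nV$ by induction. In finite dimensions this chain stabilizes, so there exists $N$ with $C^nV=C^NV$ for all $n\ge N$; write $W$ for the common value, which equals $\bigcap_{n\ge 0}C^nV$. Then $CW=C\cdot C^NV=C^{N+1}V=W$, so every element of $W$ lies in $Cw'$ for some $w'\in W$. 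Given $v\in W$, I recursively set $v_0=v$ and, having chosen $v_n\in W$, pick any $v_{n+1}\in W$ with $v_n\in Cv_{n+1}$. This sequence witnesses that $v\in C''$.

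The main obstacle lies in the reverse inclusion in infinite dimensions, which genuinely can fail: the chain $\{C^nV\}$ need not stabilize, and although every $v\in\bigcap_nC^nV$ admits, for each $n$, a length-$n$ chain of preimages under $C$, these finite chains cannot in general be amalgamated into a single infinite sequence without further input of a compactness flavour. This is precisely the phenomenon that motivates the modified definition of $C''$ in this paper relative to the one used in \cite{Ri1}, and it explains why only the inclusion, and not the equality, is asserted in general.
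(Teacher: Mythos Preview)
Your proof is correct and follows essentially the same approach as the paper: both use stabilization of the descending chain $V\supseteq CV\supseteq C^2V\supseteq\cdots$ in finite dimensions, then recursively construct the witnessing sequence inside the stable subspace $W=C^NV$ using $CW=W$. You spell out the forward inclusion and the induction more carefully than the paper (which simply says ``the inclusion is clear''), and your closing paragraph on why equality can fail in infinite dimensions is a helpful gloss, but there is no substantive difference in strategy.
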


\begin{proof}
The inclusion is clear.
If $V$ is finite-dimensional, the chain of subspaces
$V \supseteq CV \supseteq C^2 V \supseteq \dots$ stabilizes,
with $C^r V = C^{r+1}V = \dots $ for some $r$.
Then any $v\in C^r V$ belongs to $C''$ since
for any $v_n\in C^r V$ we can choose $v_{n+1}\in C^r V$ with $v_n \in C v_{n+1}$.
\end{proof}

\begin{dfn}
If $C$ is a linear relation on $V$ we define subspaces $C^\flat \subseteq C^\sharp \subseteq V$ by
$C^\sharp = C'' \cap (C^{-1})''$ 
and
$C^\flat = C'' \cap (C^{-1})' + C' \cap (C^{-1})''$.
(Note the symmetry between $C$ and $C^{-1}$.)
\end{dfn}

\begin{lem}
\label{l:sharpflatprops}
(i) $C^\sharp  \subseteq C C^\sharp $,
(ii) $C^\flat  = C^\sharp \cap CC^\flat $,
(iii) $C^\sharp  \subseteq C^{-1}C^\sharp $, and
(iv) $C^\flat  = C^\sharp \cap C^{-1}C^\flat $.
\end{lem}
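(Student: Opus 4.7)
The plan is to prove (i) and (ii) directly and to derive (iii) and (iv) from them by applying the results to $C^{-1}$: from the definitions one checks immediately that $(C^{-1})^\sharp = C^\sharp$ and $(C^{-1})^\flat = C^\flat$, so (iii) is just (i) for $C^{-1}$ and (iv) is just (ii) for $C^{-1}$.

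For (i), given $v \in C^\sharp$ I would combine the witnessing backward chain $v = v_0, v_1, v_2, \ldots$ with $v_n \in Cv_{n+1}$ (coming from $v \in C''$) with the forward chain $v = u_0, u_1, u_2, \ldots$ with $u_{n+1} \in Cu_n$ (coming from $v \in (C^{-1})''$) into a single bi-infinite $C$-chain through $v$. The element $v_1$ immediately preceding $v$ on this chain then lies on its own bi-infinite $C$-chain (the same chain shifted by one step), so $v_1 \in C^\sharp$ and $v \in Cv_1 \subseteq CC^\sharp$.

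For the forward inclusion in (ii) I would first establish $C' \subseteq C''$ by padding any finite chain $0 = u_0, \ldots, u_n = v$ with further zeros on the right, using $(0,0) \in C$ since $C$ is a linear subspace of $V \oplus V$; the same argument for $C^{-1}$ gives $(C^{-1})' \subseteq (C^{-1})''$, whence both summands of $C^\flat$ sit in $C^\sharp$ and $C^\flat \subseteq C^\sharp$. For $C^\flat \subseteq CC^\flat$ I would write $v = a + b$ with $a \in C'' \cap (C^{-1})'$ and $b \in C' \cap (C^{-1})''$, take the predecessor $a_1$ of $a$ from a backward $C''$-chain of $a$, and take the predecessor $e_{\ell-1}$ of $b$ on a finite $C'$-chain $0 = e_0, \ldots, e_\ell = b$. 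Chain-concatenation arguments of the sort used in (i) give $a_1 \in C'' \cap (C^{-1})'$ and $e_{\ell-1} \in C' \cap (C^{-1})''$, so $v \in Ca_1 + Ce_{\ell-1} \subseteq C(a_1 + e_{\ell-1}) \subseteq CC^\flat$ by linearity of $C$.

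The main obstacle is the reverse inclusion $C^\sharp \cap CC^\flat \subseteq C^\flat$. Here $v \in Cw$ with $w = a + b \in C^\flat$, and the difficulty is that, $C$ being only a relation, one cannot split $(a+b, v) \in C$ into separate pairs $(a, v_a), (b, v_b) \in C$ at will. The key trick I would use is to choose $v_a$ not arbitrarily but from the \emph{specific} chain $a = c_0, c_1, \ldots, c_m = 0$ witnessing $a \in (C^{-1})'$, setting $v_a := c_1$ (or $v_a := 0$ if $m = 0$); then $(a, v_a) \in C$ by construction, and $v_a \in C'' \cap (C^{-1})'$ by a chain-concatenation argument combining the chain $c_1, c_2, \ldots, c_m = 0$ with a backward chain for $a \in C''$. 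Defining $v_b := v - v_a$, linearity of $C$ as a subspace of $V \oplus V$ forces $(b, v_b) \in C$; extending a $C'$-chain of $b$ by one step yields $v_b \in C'$, and $v_b \in (C^{-1})''$ follows because both $v$ and $v_a$ lie in the subspace $(C^{-1})''$ (via $(C^{-1})' \subseteq (C^{-1})''$ for $v_a$). Hence $v = v_a + v_b \in C^\flat$, completing (ii).
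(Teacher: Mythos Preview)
Your proof is correct and follows essentially the same approach as the paper: part (i) via a bi-infinite chain through $v$, parts (iii) and (iv) by the symmetry $(C^{-1})^\sharp=C^\sharp$, $(C^{-1})^\flat=C^\flat$, and part (ii) by moving one step along the witnessing chains and exploiting linearity of $C$. The only organizational difference is in the reverse inclusion of (ii): the paper takes predecessors $b^+_{-1},b^-_{-1}$ of \emph{both} summands and lands the residual $v-b^+_{-1}-b^-_{-1}$ in $C^\sharp\cap C0\subseteq C^\flat$, whereas you take a predecessor $v_a$ of only the first summand and absorb the residual into $v_b=v-v_a$, which you show lies directly in $C'\cap(C^{-1})''$; this is marginally cleaner but not a genuinely different argument.
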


\begin{proof}
(i) If $v\in C^\sharp $ then there are $v_n$ ($n\in\Z$) with $v_0=v$,
$v_n\in C v_{n+1}$ for all $n$.
Now $v \in C v_1$ and clearly $v_1\in C^\sharp $,
so $C^\sharp  \subseteq C C^\sharp $.

(ii) Suppose $b\in C^\flat $. We write it as $b = b^+ + b^-$
with $b^+ \in C'' \cap (C^{-1})'$ and $b^- \in C' \cap (C^{-1})''$.
Now there are $b^\pm _n$ ($n\in\Z$) with
$b^\pm = b^\pm_0$, $b^\pm_n \in C b^\pm_{n+1}$ for all $n$,
$b^+_n = 0$ for $n\ll 0$ and $b^-_n = 0$ for $n \gg 0$.
Clearly $b^+_1 + b^-_1 \in C^\flat $ and $b = b^+ + b^- \in C(b^+_1 + b^-_1)$,
so $C^\flat \subseteq C^\sharp \cap CC^\flat $.
Conversely, suppose that $v \in C^\sharp \cap C b$.
Then $b^\pm_{-1} \in C b^\pm$, so
\[
v - b^+_{-1} - b^-_{-1}
\in C^\sharp \cap C(b-b^+-b^-)
= C^\sharp \cap C0
\subseteq C^\sharp \cap C' \subseteq C^\flat.
\]
Clearly also $b^\pm_{-1}\in C^\flat $, so $v\in C^\flat$.

(iii) and (iv) follow by symmetry between $C$ and $C^{-1}$.
\end{proof}

\begin{lem}
A linear relation $C$ on $V$ induces an automorphism $\theta$ of $C^\sharp/C^\flat$
with $\theta(C^\flat + v) = C^\flat + w$  if and only if $w \in C^\sharp \cap (C^\flat + Cv)$.
\end{lem}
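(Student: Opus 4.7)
The plan is to verify in order: (a) the prescription produces at least one $w$ for each $v \in C^\sharp$, (b) any two such $w$'s differ by an element of $C^\flat$, (c) the resulting map $\tilde\theta \colon C^\sharp \to C^\sharp/C^\flat$ is linear, (d) its kernel is precisely $C^\flat$ (so it descends to an injective map on the quotient), and (e) the induced map is surjective. Each step uses an appropriate part of Lemma~\ref{l:sharpflatprops} or the definitions, and the whole thing runs by close analogy with the finite-dimensional case treated in \cite{Ri1}.

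For (a), since $v \in C^\sharp \subseteq (C^{-1})''$, there is a forward chain $v = v_0, v_1, v_2, \dots$ with $v_{n+1} \in Cv_n$. The same reasoning as in the proof of Lemma~\ref{l:sharpflatprops}(i) shows $v_1 \in C^\sharp$, so $v_1 \in C^\sharp \cap Cv$. For (b), if $w_1, w_2 \in C^\sharp \cap (C^\flat + Cv)$, writing $w_i = b_i + c_i$ with $b_i \in C^\flat$, $c_i \in Cv$, the difference satisfies $c_1 - c_2 \in C0 \subseteq C'$, so $w_1 - w_2 \in C^\flat + (C^\sharp \cap C')$. Since $C^\sharp \cap C' \subseteq C' \cap (C^{-1})'' \subseteq C^\flat$, we conclude $w_1 - w_2 \in C^\flat$. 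Linearity (c) is immediate from the fact that $C \subseteq V \oplus V$ is a linear subspace: if $(v_i, c_i) \in C$, then $(\lambda_1 v_1 + \lambda_2 v_2, \lambda_1 c_1 + \lambda_2 c_2) \in C$, so preimage data add linearly.

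The heart of the argument is (d). If $v \in C^\flat$ and $w = b + c$ with $b \in C^\flat$ and $c \in Cv$, then $c \in Cv \subseteq CC^\flat$ and, by Lemma~\ref{l:sharpflatprops}(ii), $b \in C^\flat \subseteq CC^\flat$; hence $w \in CC^\flat$. Combined with $w \in C^\sharp$, the equality $C^\flat = C^\sharp \cap CC^\flat$ from Lemma~\ref{l:sharpflatprops}(ii) gives $w \in C^\flat$, so $C^\flat \subseteq \ker \tilde\theta$. Conversely, if $\tilde\theta(v) = 0$, choose $w \in Cv \cap C^\sharp$ as in (a); then $w \in C^\flat$, so $v \in C^{-1} w \subseteq C^{-1} C^\flat$, and Lemma~\ref{l:sharpflatprops}(iv) yields $v \in C^\sharp \cap C^{-1} C^\flat = C^\flat$. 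Finally, for (e), given $u \in C^\sharp$, Lemma~\ref{l:sharpflatprops}(i) provides $v \in C^\sharp$ with $u \in Cv$, and then $\theta(C^\flat + v) = C^\flat + u$.

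The main obstacle is step (d): both halves of Lemma~\ref{l:sharpflatprops}(ii) and (iv) are needed in a balanced way, and without these sharp characterizations of $C^\flat$ one would only get well-definedness modulo some larger subspace. Everything else is essentially bookkeeping driven by the fact that $C$ is a linear subspace of $V \oplus V$ and that $C^\sharp$ and $C^\flat$ are stable under both $C$ and $C^{-1}$ in the appropriate sense.
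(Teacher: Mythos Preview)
Your proof is correct and follows essentially the same approach as the paper's, just organized more explicitly into separate steps; the paper compresses your (b) and the first half of (d) into a single well-definedness check (handling both the choice of $w$ and the choice of representative for $v$ at once via Lemma~\ref{l:sharpflatprops}(ii)), and then dispatches injectivity and surjectivity together by invoking the symmetry between $C$ and $C^{-1}$ rather than arguing them separately as you do in (d) and (e).
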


\begin{proof}
For $v\in C^\sharp$ we define $\theta$ by $\theta(C^\flat + v) = C^\flat + w$
where $w$ is any element of $C^\sharp \cap (C^\flat + Cv)$.
There always is some $w$ by Lemma~\ref{l:sharpflatprops}(iii),
and $\theta$ is well-defined since if $w'\in C^\sharp \cap (C^\flat + Cv')$
and $v-v'\in C^\flat$, then
\[
w-w' \in C^\sharp \cap (C^\flat + C(v-v')) \subseteq C^\flat  + C^\sharp\cap C(v-v')
\subseteq C^\flat  + C^\sharp\cap C C^\flat = C^\flat
\]
by Lemma~\ref{l:sharpflatprops}(ii).
Clearly $\theta$ is a linear map, and by symmetry between $C$ and $C^{-1}$ it is an automorphism.
\end{proof}

If $C$ is a linear relation on $V$, we say that $C$ is \emph{split} if there is a subspace $U$ of $V$ such that
$C^\sharp =C^\flat \oplus U$ and the restriction of $C$ to $U$ is an automorphism.

\begin{lem}[Splitting Lemma]
\label{l:splitting}
If $C$ is a linear relation on $V$ and $C^\sharp/C^\flat$ is finite-dimensional, then $C$ is split.
\end{lem}

\begin{proof}
Let $\theta$ be the induced automorphism of $C^\sharp/C^\flat$
and let $A = (a_{ij})$ be the matrix of $\theta$ with respect to
a basis $C^\flat +v_1,\dots , C^\flat +v_k$ of $C^\sharp /C^\flat$.
Thus
\[
\theta(C^\flat  + v_j) = \sum_{i=1}^k a_{ij} (C^\flat  + v_i) =  C^\flat  + \sum_{i=1}^k a_{ij} v_i
\]
so there are $b_1,\dots b_k\in C^\flat $ with
\[
b_j + \sum_{i=1}^k a_{ij} v_i  \in C v_j
\]
for all $j$.
We write $b_j = b^+_j + b^-_j$ with $b^+_j \in C'' \cap (C^{-1})'$
and $b^-_j \in C' \cap (C^{-1})''$. Now there are $b^\pm _{j,n}$ ($n\in\Z$)
with
$b^\pm_j = b^\pm_{j,0}$, $b^\pm_{j,n} \in C b^\pm_{j,n+1}$ for all $n$,
$b^+_{j,n} = 0$ for $n\ll 0$ and $b^-_{j,n} = 0$ for $n \gg 0$.
Define matrices $M^{\pm,n} = (m^{\pm,n}_{i,j})$ for $n\in \Z$ by
\[
M^{+,n} =
\begin{cases}
0 & (n > 0) \\
(A^{-1})^{1-n} & (n\le 0)
\end{cases}
\quad\text{and}\quad
M^{-,n} =
\begin{cases}
- A^{n-1} & (n>0) \\
0 & (n\le 0).
\end{cases}
\]
and let
\[
u_j = v_j
+ \sum_{n\in\Z} \sum_{i=1}^k m^{+,n}_{ij} b^+_{i,n}
+ \sum_{n\in\Z} \sum_{i=1}^k m^{-,n}_{ij} b^-_{i,n}.
\]
These are finite sums since $M^{+,n}=0$ for $n> 0$
and $b^+_{i,n}=0$ for $n\ll 0$,
and $M^{-,n}=0$ for $n\le 0$
and $b^-_{i,n}=0$ for $n\gg 0$.
Now
\[
b_j +\sum_{i=1}^k a_{ij} v_i  \in C v_j
\]
implies
\[
b_{j,0}
+ \sum_{i=1}^k a_{ij} v_i
+ \sum_{n\in\Z} \sum_{i=1}^k m^{+,n}_{ij} b^+_{i,n-1}
+ \sum_{n\in\Z} \sum_{i=1}^k m^{-,n}_{ij} b^-_{i,n-1}
\in C u_j.
\]
If $\delta_{pq}$ is the Kronecker delta function, we have
\[
\delta_{n0} I + M^{\pm,n+1} = AM^{\pm,n}
\]
which enables this to be rewritten as
\[
\sum_{i=1}^k a_{ij} u_i \in C u_j
\]
for all $j$.
Then $C^\sharp =C^\flat \oplus U$ where
$U$ has basis $u_1,\dots,u_k$,
and $C$ induces on $U$ the automorphism with matrix $A$.
\end{proof}

\section{Torsion}
\label{s:torsion}
A $k[z]$-module $V$ is torsion if and only if it is locally finite dimensional.
The torsion submodule $\tau(V)$ of an arbitrary module $V$ decomposes as 
the direct sum of
\begin{align*}
\tau^0(V) &= \{ v\in V : \text{$z^n v=0$ for some $n\ge 0$} \}, \text{ and}
\\
\tau^1(V) &= \{ v\in V : \text{$f(z)v=0$ for some $f(z)\in k[z]$ with $f(0)=1$} \}
\end{align*}
which we call the \emph{nilpotent torsion} and \emph{primitive torsion} submodules of $V$.

\begin{lem}
\label{l:splittingb}
If $V$ is a torsion $k[z]$-module, and $C = \{ (v,zv) : v\in V\}$ is the graph of multiplication by $z$,
then $C$ is a split relation.
\end{lem}

\begin{proof}
Multiplication by $z$ is invertible on $\tau^1(V)$,  so $\tau^1(V) \subseteq C''$. Also $C' = 0$, $(C^{-1})'' = V$ and $(C^{-1})' = \tau^0 (V)$.
Thus 
\[
C^\sharp = C'' \cap (\tau^0(V)\oplus \tau^1(V)) = (C''\cap \tau^0(V)) \oplus \tau^1(V) = C^\flat \oplus \tau^1(V).
\qedhere
\]
\end{proof}

Now we return to the string algebra $\Lambda = kQ/(\rho)$.
If $M$ is a $\Lambda$-module, we consider it as a $k[z]$-module,
and hence define $\tau(M)$, $\tau^0(M)$ and $\tau^1(M)$.
They are $\Lambda$-submodules of $M$, and we have
\[
\tau(M) = \bigoplus_v \tau(e_v M)
\quad\text{and}\quad
\tau^i(M) = \bigoplus_v \tau^i(e_v M).
\]
We say that $M$ is \emph{nilpotent torsion} if $M=\tau^0(M)$
and \emph{primitive torsion} if $M=\tau^1(M)$.
If $P$ is a primitive cycle with head $v$ we can also consider $e_v M$
as a $k[P]$-module, and we write
\[
\tau_P(e_v M) = \tau_P^0(e_v M) \oplus \tau_P^1(e_v M)
\]
for the corresponding torsion submodules. 
They are $k[z]$-submodules of $e_v M$.

\begin{lem}
\label{l:tauprimed}
We have
\[
\tau^0(e_v M) = \bigcap_P \tau_P^0(e_v M)
\quad\text{and}\quad
\tau^1(e_v M) = \bigoplus_P \tau_P^1(e_v M)
\]
where $P$ runs through the (up to two) primitive cycles with head $v$.
\end{lem}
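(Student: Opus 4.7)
The plan is to split into cases according to the number of primitive cycles with head $v$. If there are none, then $z_v = 0$, so $\tau^0(e_vM) = e_vM$ and $\tau^1(e_vM) = 0$, which agree with the empty intersection and empty direct sum respectively. If there is a unique primitive cycle $P$ at $v$, then $z_v = P$ acts on $e_vM$ as $P$ does, so $\tau^i(e_vM) = \tau_P^i(M)$ for $i = 0,1$ by definition. The real content is the case of two distinct primitive cycles $P$ and $R$ at $v$.

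In this case, the engine is the identity $PR = RP = 0$ already recorded before the lemma, together with its consequence $z_v^n = P^n + R^n$ for $n \ge 1$. From $PR = RP = 0$ one deduces, for any polynomial $f(z) \in k[z]$ with $f(0) = 1$, that
\[
f(P)\, f(R) \;=\; f(P) + f(R) - 1 \;=\; f(z_v),
\]
since all cross terms $P^iR^j$ with $i,j \ge 1$ vanish. This identity is the key computational tool for the $\tau^1$ part; the analogous identity $z_v^n = P^n + R^n$ for $n\ge 1$ is the tool for the $\tau^0$ part.

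For the first equality, given $x \in e_vM$ with $z_v^n x = 0$, one has $P^n x = -R^n x$, so multiplying by $P$ gives $P^{n+1}x = -PR^n x = 0$, and similarly $R^{n+1}x = 0$. Hence $x \in \tau_P^0(M) \cap \tau_R^0(M)$. Conversely, if $P^n x = R^n x = 0$ for some (common) $n$, then $z_v^n x = P^n x + R^n x = 0$. This proves $\tau^0(e_vM) = \tau_P^0(M) \cap \tau_R^0(M)$.

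For the second equality, suppose $f(z_v)x = 0$ with $f(0)=1$. Using the identity above, write $x = f(R)x + f(P)x - x + x = f(R)x + f(P)x$ after rearranging $f(P)+f(R)-1 = f(z_v)$. Then $f(P)\cdot f(R)x = f(z_v)x = 0$, so $f(R)x \in \tau_P^1(M)$, and symmetrically $f(P)x \in \tau_R^1(M)$, giving $\tau^1(e_vM) \subseteq \tau_P^1(M) + \tau_R^1(M)$. Conversely, if $g(P)y = 0$ with $g(0)=1$, then multiplying by $R$ gives $Ry = -a_1RPy - \cdots = 0$, so $R$ annihilates $\tau_P^1(M)$; this shows $g(z_v)y = g(P)y + g(R)y - y = 0 + y - y = 0$, so $\tau_P^1(M) \subseteq \tau^1(e_vM)$, and likewise for $R$. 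Finally, the sum is direct: any $y \in \tau_P^1(M) \cap \tau_R^1(M)$ satisfies $Py = Ry = 0$, so from $y + a_1Py + \cdots = g(P)y = 0$ we get $y = 0$. The only step that requires a touch of care is producing the explicit decomposition $x = f(R)x + f(P)x$; the rest is a direct computation from $PR = RP = 0$.
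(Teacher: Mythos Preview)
Your proof is correct and follows essentially the same approach as the paper's: both reduce to the case of two primitive cycles $P,R$ and exploit $PR=RP=0$ together with $z_v^n=P^n+R^n$, and your decomposition $x=f(R)x+f(P)x$ is literally the paper's $m=-g(P)m-g(R)m$ (since $f(R)x=-g(P)x$ when $f=1+g$). One presentational quibble: the line ``$x = f(R)x + f(P)x - x + x = f(R)x + f(P)x$'' is circular as written; just say $f(P)x+f(R)x-x=f(z_v)x=0$, hence $x=f(P)x+f(R)x$.
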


\begin{proof}
We only need to deal with the case when there are two primitive cycles $P,R$ with head $v$.
If $m\in \tau^0(e_v M)$ then $z^n m=0$ for some $n$. Thus $(P^n + R^n)m=0$,
so $P^{n+1}m = R^{n+1}m = 0$, and hence $m \in \tau_P^0(e_v M)\cap \tau_R^0(e_v M)$.
Also $\tau_P^1(e_v M)$ is annihilated by $R$, so its intersection with $\tau_R^1(e_v M)$ must be zero.
Now suppose that $m\in e_v M$ and $f(z)m = 0$ with $f(z)=1+g(z)$ where $g(0)=0$.
Then $0 = f(P+R)m = m + g(P)m + g(R)m$.
Thus $0 = g(P)(m + g(P)m + g(R)m) = g(P)m + g(P)^2 m = f(P) g(P)m$,
so $g(P)m\in \tau_P^1(e_v M)$. Similarly $g(R)m\in \tau_R^1(e_v M)$,
so $m = -g(P)m - g(R)m$ is in the direct sum.
\end{proof}

\begin{lem}
\label{l:ptorsion}
Suppose $P$ is a primitive cycle with head $v$ and $M$ is a $\Lambda$-module. 
Let $I = \bigcap_{n\ge 0} P^n M$.
\begin{itemize}
\item[(i)]
If $M$ is finitely controlled, then $I = P'' = \tau_P^1(e_v M)$.
\item[(ii)]
If $M$ is finitely controlled or pointwise artinian, then $I \subseteq PI$.
\end{itemize}
\end{lem}

\begin{proof}
(i) Clearly $\tau_P^1(e_v M) \subseteq P'' \subseteq I$.
Now $e_v M$ is a finitely
generated module for the ring $k[P]$, or $k[P,R]/(PR)$ if there is another primitive cycle $R$ with head $v$.
Then by Krull's Theorem~\cite[Theorem 8.9]{Matsumura} applied to $e_v M$
and the ideal generated by $P$, we have 
$I \subseteq \tau^1_P(e_v M)$.

(ii) The finitely controlled case follows from (i). 
The subspaces $P^n M$ are $k[z]$-submodules, so in the pointwise artinian case 
we have $P^n M = P^{n+1}M = \dots$ for some $n$, so $I = P^{n+1}M = PI$.
\end{proof}

\section{Functorial filtration given by words}
\label{s:functfilt}
For $v$ a vertex and $\epsilon=\pm 1$, we define $\W_{v,\epsilon}$ to be the set of
all words with head $v$ and sign $\epsilon$. They are necessarily either 
finite words or $\N$-words.
There is a total order on $\W_{v,\epsilon}$ given by $C < C'$ if
\begin{itemize}
\item[(a)]$C = ByD$ and $C' = Bx^{-1}D'$ where $B$ is a finite word, $x,y$ are arrows, and $D,D'$ are words, or
\item[(b)]$C'$ is a finite word and $C = C'yD$ where $y$ is an arrow and $D$ is a word, or
\item[(c)]$C$ is a finite word and $C' = Cx^{-1}D'$ where $x$ is an arrow and $D'$ is a word.
\end{itemize}

For any $\Lambda$-module $M$ and $C\in \W_{v,\epsilon}$ we define subspaces
\[
C^-(M) \subseteq C^+(M) \subseteq e_v M
\]
as follows.
First suppose that $C$ is a finite word.
Then $C^+(M) = C x^{-1}0$ if there is an
arrow $x$ such that $C x^{-1}$ is a word, and otherwise $C^+(M) = C M$.
Similarly, $C^-(M) = C y M$ if there is an arrow $y$ such that $C y$ is a word, and
otherwise $C^-(M) = C 0$.
Now suppose that $C$ is an $\N$-word.
Then $C^+(M)$ is the set of $m\in M$ such that
there is a sequence $m_n$ ($n\ge 0$) such that $m_0=m$
and $m_{n-1} \in C_n m_n$ for all $n$.
One defines $C^-(M)$ to be the set of $m\in M$
such that there is a sequence $m_n$ as above which is eventually zero.
Equivalently $C^-(M) = \bigcup_n C_{\le n} 0$.
Observe that if $C\in \W_{v,\epsilon}$ is repeating,
say $C=D^\infty$, then $C^-(M) = D'$ and $C^+(M) = D''$,
where $D$ is considered as a linear relation on $e_v M$.

Clearly one has $\theta(C^\pm(M))\subseteq C^\pm(N)$
for a homomorphism $\theta:M\to N$ of $\Lambda$-modules.
Thus $C^\pm$ define subfunctors of the forgetful
functor from $\Lambda$-modules to vector spaces (or $k[z]$-modules).

\begin{lem}
\label{l:dirsums}
The functors $C^\pm$ commute with arbitrary direct sums.
\end{lem}

\begin{proof}
Straightforward.
\end{proof}

\begin{lem}
\label{l:onesideorder}
If $C,D\in \W_{v,\epsilon}$ and $C<D$, then $C^+(M) \subseteq D^-(M)$.
\end{lem}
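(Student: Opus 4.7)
The plan is to verify the inclusion $C^+(M) \subseteq D^-(M)$ in each of the three cases (a), (b), (c) of the definition of $C < D$; the arguments for finite words follow Ringel~\cite{Ri1}, and I will take care to handle $\N$-words uniformly. Case (c) is easy: $C$ is a finite word of some length $k$ and $D = Cx^{-1}E'$, so $Cx^{-1}$ is a word and $C^+(M) = Cx^{-1} 0$ by definition. If $D$ is an $\N$-word, then $D^-(M) = \bigcup_n D_{\le n} 0 \supseteq D_{\le k+1} 0 = Cx^{-1} 0$; if $D$ is a longer finite word, the fact that every relation $D_j$ contains $(0,0)$ shows $Cx^{-1} 0 \subseteq D 0 \subseteq D^-(M)$. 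Case (b) is dual: $D$ is a finite word of length $k$, $C = DyE$, so $Dy$ is a word and $D^-(M) = DyM$. Any $m \in C^+(M)$ produces a chain $m_0 = m, m_1, \ldots, m_{k+1}$ with $m_{i-1} \in C_i m_i$, and $C_{k+1} = y$ forces $m_k = y m_{k+1}$; hence $m \in D(y m_{k+1}) \subseteq DyM = D^-(M)$.

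Case (a) is the main obstacle. Here $C = ByE$ and $D = Bx^{-1}E'$, with $B$ finite of length $k \ge 0$; $C$ and $D$ branch at position $k+1$ into the direct letter $y$ and the inverse letter $x^{-1}$. The essential observation is that $xy \in \rho$. The letters $y$ and $x^{-1}$ share a head: the tail of $B_k$ when $k \ge 1$, or the vertex $v$ itself when $k = 0$. When $k \ge 1$, applying the fact that $C_i^{-1}$ and $C_{i+1}$ have opposite signs to the consecutive pairs $(B_k, y)$ in $C$ and $(B_k, x^{-1})$ in $D$ shows $y$ and $x^{-1}$ also share a sign; when $k = 0$, the membership $C, D \in \W_{v,\epsilon}$ forces both letters to have sign $\epsilon$. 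Distinct letters sharing head and sign must, by the sign axiom for a string algebra, form a set $\{x'^{-1}, y'\}$ with $x'y' \in \rho$; necessarily $x' = x$ and $y' = y$, so $xy \in \rho$ and hence $xy = 0$ in $\Lambda$.

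To finish case (a), take $m \in C^+(M)$. Unwinding yields $m_0 = m, m_1, \ldots, m_{k+1}$ with $m_{i-1} \in C_i m_i$, and in particular $m_k = y m_{k+1}$. I then construct a chain along $D$ by setting $m'_i = m_i$ for $i \le k$, $m'_{k+1} = x m_k = (xy) m_{k+1} = 0$, and $m'_j = 0$ for $j > k+1$. Since each relation $D_j$ contains $(0,0)$, this is a valid chain, and it is eventually zero; this witnesses $m \in D^-(M)$, either via $D^-(M) = \bigcup_n D_{\le n} 0$ when $D$ is an $\N$-word, or via $m \in D 0 \subseteq D^-(M)$ when $D$ is finite.
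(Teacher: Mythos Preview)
Your proof is correct and is precisely the standard argument the paper has in mind; the paper itself gives no proof beyond citing Ringel's lemma for finite words, and your write-up carefully extends that reasoning to $\N$-words via the obvious eventually-zero chain. In particular, your use of the sign axiom to deduce $xy\in\rho$ in case~(a), and the resulting vanishing $m'_{k+1}=xym_{k+1}=0$, is exactly the crux of the classical proof.
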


\begin{proof}
Standard. For finite words, see the lemma on page 23 of \cite{Ri1}.
\end{proof}

\section{Refined Functors}
\label{s:refined}
If $B$ and $D$ are words with the same head $v$ and opposite signs, and $M$ is a $\Lambda$-module, we define
\begin{align*}
F_{B,D}^+(M) &= B^+(M) \cap D^+(M),
\\
F_{B,D}^-(M) &= (B^+(M) \cap D^-(M)) + (B^-(M) \cap D^+(M)),\text{ and}
\\
F_{B,D}(M) &= F_{B,D}^+(M)/F_{B,D}^-(M).
\end{align*}

If $C = B^{-1}D$ is a non-periodic word,
we consider $F_{B,D}$ as a functor from the category of $\Lambda$-modules
to vector spaces. 

If $C = B^{-1}D$ is a periodic word, say of period $n$, then $C = {}^\infty E^\infty$ for some word $E$ of length $n$ and head $v$. 
If $M$ is a $\Lambda$-module,
then $E$ induces a linear relation on $e_v M$, and
$F_{B,D}^+(M) = E^\sharp $ and
$F_{B,D}^-(M) = E^\flat $
as in Section~\ref{s:relations}.
Thus $E$ induces an automorphism of $F_{B,D}(M) = E^\sharp/E^\flat$, 
and hence $F_{B,D}$ defines a functor from $\Lambda$-modules to $k[T,T^{-1}]$-modules, with the action of $T$ given by this automorphism.
We say that $M$ is \emph{$E$-split} or \emph{$C$-split} if the relation $E$ on $e_v M$ is split.

Let $v$ be a vertex.
If $(B,D) \in \W_{v,1}\times \W_{v,-1}$ and $M$ is a $\Lambda$-module, we define
\[
G^\pm_{B,D}(M) = B^-(M) + D^\pm(M) \cap B^+(M) \subseteq e_v M
\]
Clearly $G^-_{B,D}(M) \subseteq G^+_{B,D}(M)$ and $G^+_{B,D}(M)/G^-_{B,D}(M) \cong F_{B,D}(M)$.
We totally order $\W_{v,1}\times \W_{v,-1}$ lexicographically, so
\[
(B,D) < (B',D') \quad \Leftrightarrow \quad \text{if $B < B'$ or ($B=B'$ and $D<D'$).}
\]
We have $G^+_{B,D}(M) \subseteq G^-_{B',D'}(M)$ for $(B,D)<(B',D')$ by Lemma~\ref{l:onesideorder}.

\begin{lem}
\label{l:shiftfunctor}
~
%We have the following.
\begin{itemize}
\item[(i)]
\vspace{-4pt}
$F_{B,D}$ commutes with direct sums.
\item[(ii)]
If $B^{-1}D$ is not a word, then $F_{B,D} = 0$.
\item[(iii)]
If $B^{-1}D$ is a non-periodic word, then $F_{D,B} \cong F_{B,D}$.
\item[(iv)]
If $B^{-1}D$ is a periodic word, then $F_{D,B} \cong \res_\iota \, F_{B,D}$.
\item[(v)]
If $C$ is a fixed word, the functors $F_{B,D}$ with $B^{-1}D = C[n]$, for any $n$,
are all isomorphic.
\end{itemize}
\end{lem}

\begin{proof}
(i) This follows from Lemma~\ref{l:dirsums}.

(ii) $B^{-1}D$ must involve a zero relation, and exchanging $B$ and $D$ if necessary, we may assume that
$B = x_n^{-1}\dots x_1^{-1} C$ and $D = y_1\dots y_r E$ with $x_1\dots x_n y_1\dots y_r\in \rho$.
If $m\in F_{B,D}^+(M)$ then $m = y_1\dots y_r m'$ with $m'\in E^+(M)$,
so $m\in x_n^{-1}\dots x_1^{-1} 0 \subseteq B^-(M)$, so $m\in F_{B,D}^-(M)$.

(iii), (iv) Clear.

(v) This is the same as the corresponding lemma at the top of page 25 in \cite{Ri1}.
The extension to functors to $k[T,T^{-1}]$-modules in case $C$
is periodic is straightforward.
\end{proof}

\begin{lem}
\label{l:fdintersection}
Suppose $B$ and $D$ are non-trivial words with head $v$ and opposite signs, and that the first letters of both are direct.
If $M$ is finitely controlled or pointwise artinian then $B^+(M)\cap D^+(M)$ is finite dimensional.
\end{lem}

\begin{proof}
The action of $z$ annihilates $B^+(M)\cap D^+(M)$, since any arrow with tail $v$ has zero composition with the first arrow of $B$ or $D$.
Now since $e_v M$ is either finitely generated or artinian as a $k[z]$-module, 
the subspace $\{ m\in e_v M : zm=0\}$ is finite dimensional.
\end{proof}

\begin{lem}
\label{l:fbdsplit}
Suppose that $M$ is a finitely controlled or pointwise artinian $\Lambda$-module.
Suppose that $C = B^{-1}D$ is periodic of period $n$,
so $C = {}^\infty E^\infty$ for some word $E$ of length $n$ and head $v$, and $B = (E^{-1})^\infty$ and $D = E^\infty$.
Then either
\begin{itemize}
\item[(i)]
$F_{B,D}(M)$ is finite dimensional, or
\item[(ii)]
$F_{B,D}(M)$ is an artinian $k[T,T^{-1}]$-module and $E$ or $E^{-1}$ is a primitive cycle.
\end{itemize}
In either case, the relation $E$ on $e_v M$ is split.
\end{lem}

\begin{proof}
If $C$ is not direct or inverse, then by Lemma~\ref{l:shiftfunctor} we may apply a shift, and hence we may suppose that the situation 
of Lemma~\ref{l:fdintersection} applies, so case (i) holds.
Supposing otherwise, and interchanging $B$ and $D$ if necessary, we may suppose that $C$ is direct, so since it is periodic, $E = P$, a primitive cycle.
Now if $M$ is finitely controlled, we have $F_{B,D}^+(M) = P'' = \tau^1_P(e_v M)$ by Lemma~\ref{l:ptorsion}, which is finite dimensional.
If not, then $F_{B,D}(M)$ is a quotient of a $k[z]$-submodule of $e_v M$, with the action of $T$ being the same as the action of $z$,
so it is artinian. 
Now the splitting follows from the Splitting Lemma~\ref{l:splitting} in case (i) or Lemma~\ref{l:splittingb} in case (ii).
\end{proof}

\section{Evaluation on string and band modules}
The results in this section are essentially the same as those in \cite[\S\S 4,5]{Ri1}.
Suppose $C$ is an $I$-word.
For $i\in I$, the words $C_{>i}$ and $(C_{\le i})^{-1}$
have head $v_i(C)$ and opposite signs.
For $\epsilon=\pm 1$, let $C(i,\epsilon)$ denote the one which has sign $\epsilon$.
We define $d_i(C,\epsilon) = 1$ if $C(i,\epsilon) = C_{>i}$ and
$d_i(C,\epsilon) = -1$ if $C(i,\epsilon) = (C_{\le i})^{-1}$.

\subsection*{String modules}
Recall that if $C$ is a non-periodic $I$-word, the string module $M(C)$ has basis the symbols $b_i$ for $i\in I$.

\begin{lem}
\label{l:evword}
If $D\in \W_{v,\epsilon}$ then
\begin{itemize}
\item[(i)]
$D^+(M(C))$ has basis $\{ b_i : v_i(C)=v, C(i,\epsilon) \le D \}$, and
\item[(ii)]
$D^-(M(C))$ has basis $\{ b_i : v_i(C)=v, C(i,\epsilon) < D \}$.
\end{itemize}
\end{lem}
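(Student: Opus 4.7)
The plan is to prove both parts of the lemma by induction on the length of $D$ when $D$ is finite, and then to handle the $\N$-word case by analyzing chains directly. For finite $D$ the argument is essentially that of Ringel \cite{Ri1}.

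For finite $D$, I induct on $|D|$. The base case $D = 1_{v,\epsilon}$ is a direct case analysis: depending on which (at most two) letters with head $v$ and sign $\epsilon$ exist, compute $1_{v,\epsilon}^\pm(M(C))$ from the definition and verify that it matches the claimed basis by reading off whether $C(i,\epsilon)$ begins with a direct letter, an inverse letter, or is trivial. For the inductive step, write $D = \ell E$; directly from the definitions one checks $D^\pm(M(C)) = \ell \cdot E^\pm(M(C))$. Apply the induction hypothesis to $E$, which lies in $\W_{u,\epsilon'}$ where $u$ is the tail of $\ell$ and $\epsilon'$ the appropriate sign. The linear relation $\ell$ on $M(C)$ acts cleanly on basis elements: word condition~(b) ensures that at most one basis element maps to any given one under a single arrow, so $\ell$ sends basis-spanned subspaces to basis-spanned ones. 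The matching of orderings, i.e.\ the translation of $C(j,\epsilon') \le E$ to $C(i,\epsilon) \le D$ for the neighbouring basis element $b_i$, follows by prepending a common first letter in the order definition.

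For $D$ an $\N$-word and $b_i \in e_v M(C)$, I attempt to build a chain $(m_k)$ with $m_0 = b_i$ and $m_{k-1} \in D_k m_k$, using the natural candidate $m_k = b_{i + \sigma k}$ where $\sigma = d_i(C,\epsilon)$, which reads along $C(i,\epsilon)$. Three cases cover the analysis. (i) If $C(i,\epsilon) = D$, the natural chain works for all $k$, placing $b_i$ in $D^+(M(C))$ but not in $D^-(M(C))$. (ii) If $C(i,\epsilon) < D$ via rule~(a) or~(c) of the order, the chain follows basis elements through the common prefix $B$ and can then be continued by zeros. The crucial point is that at a rule-(a) branching, the letters $y$ (direct, in $C(i,\epsilon)$) and $x^{-1}$ (inverse, in $D$) share the same head and sign, so $xy \in \rho$ by the string algebra axiom, hence $x$ kills the basis element $m_{|B|}$; in rule~(c), the requirement that $C(i,\epsilon)\,x^{-1}$ be a word forces the terminal letter of $C$ to differ from $x$, again giving $x m_{|B|} = 0$. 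In both subcases the chain is eventually zero, so $b_i \in D^-(M(C))$. (iii) If $C(i,\epsilon) > D$, I show no chain starting at $b_i$ exists, because at the first disagreement with $D$, the required preimage under a direct letter of $D$ is not in the image of that arrow on $M(C)$ (which is basis-spanned), and attempting to realize it via a linear combination fails by the same zero-relation and word-condition arguments.

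The main obstacle will be case~(iii): ruling out exotic chains that exploit the freedom of choosing $m_{k+1}$ modulo the kernel of a direct-letter relation. The key is that word condition~(b) applied to $D$ itself, combined with the structure of $C$, forces the basis element that would need to be a preimage to lie outside the image of the relevant arrow; this final combinatorial check closes the converse direction.
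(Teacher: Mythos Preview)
Your route differs from the paper's. The paper does not induct on $|D|$ or case-split on finite versus $\N$-words; instead it uses Lemma~\ref{l:onesideorder} to reduce the entire statement to two claims about the single word $D = C(i,\epsilon)$: that $b_i \in C(i,\epsilon)^+(M)$, and that every element of $C(i,\epsilon)^-(M)$ has zero $b_i$-coefficient. Once those hold, the containments $C^+(M) \subseteq D^-(M)$ for $C < D$ finish the job uniformly in $D$. Your direct computation is heavier but not unreasonable, and your finite-word induction (on subspaces, using that letter-relations send basis-spanned subspaces to basis-spanned subspaces) is sound.

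There is, however, a real gap in your $\N$-word analysis. You determine, for each basis vector $b_i$, whether a chain starting at $m_0 = b_i$ exists (cases (i)--(iii)). But knowing which individual $b_i$ lie in $D^+(M)$ does not by itself show that $D^+(M)$ is \emph{spanned} by those $b_i$: a priori some $m = \sum_j \lambda_j b_j$ with $\lambda_i \neq 0$ and $C(i,\epsilon) > D$ could lie in $D^+(M)$ even though $b_i$ alone does not. (The analogous issue for $D^-(M)$ is harmless, since $D^-(M) = \bigcup_n D_{\le n}\,0$ and each $D_{\le n}\,0$ is basis-spanned by your letter-by-letter argument; but $D^+(M)$ has no such description for general $M(C)$.) You flag the freedom in choosing $m_k$ for $k \ge 1$, but the problem is already present at $m_0$. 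The fix is to upgrade case~(iii) to a coefficient-tracking statement: for any chain $(m_k)$ with $m_{k-1}\in D_k m_k$, the coefficient of $b_{i + d_i(C,\epsilon)\,k}$ in $m_k$ equals that of $b_i$ in $m_0$ as long as $D_{\le k} = C(i,\epsilon)_{\le k}$, and at the first disagreement the direct letter of $D$ forces that coefficient to vanish by exactly your zero-relation and word-condition checks. This is precisely the computation the paper carries out, though only for $D = C(i,\epsilon)$, leaving the general $D$ to the ordering lemma.
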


\begin{proof}
Let $M = M(C)$.
Using the ordering on words and functors, it suffices to show that
$b_i \in C(i,\epsilon)^+(M)$ and that if a linear combination $m$
of the basis elements $b_j$ belongs to $C(i,\epsilon)^-(M)$,
then the coefficient of $b_i$ in $m$ is zero.

If $C(i,\epsilon)$ is finite, let $1_{u,\eta}$ be the trivial word with $C(i,\epsilon)1_{u,\eta}$
defined (and hence equal to $C(i,\epsilon)$).
Define $d = d_i(C,\epsilon)$.
For $n\ge 1$, and not greater than the length
of $C(i,\epsilon)$, we have $b_{i+d(n-1)} \in C(i,\epsilon)_n b_{i+dn}$.
Moreover, if $C(i,\epsilon)$ has length $n$ then $b_{i+dn}\in 1_{u,\eta}^+(M)$.
It follows that $b_i \in C(i,\epsilon)^+(M)$.

By induction on $n$, the following is straightforward.
Suppose $n$ is not greater than the length of $C(i,\epsilon)$.
If $m$ is an element of $M$ whose coefficient of $b_i$ is $\lambda$,
and $m\in C(i,\epsilon)_{\le n} m'$, then the coefficient
of $b_{i+dn}$ in $m'$ is also $\lambda$.
Clearly if $C(i,\epsilon)$ has length $n$, then no element of $1_{u,\eta}^-(M)$
has $b_{i+dn}$ occuring with non-zero coefficient.
It follows that no element of $C(i,\epsilon)^-(M)$
can have $b_i$ occuring with non-zero coefficient.
\end{proof}

\begin{lem}
\label{l:evtwostring}
Let $M=M(C)$ where $C$ is a non-periodic $I$-word.
\begin{itemize}
\item[(i)]
If $i\in I$, then $F_{C(i,1),C(i,-1)}^+(M) = F_{C(i,1),C(i,-1)}^-(M) \oplus k b_i$.
\item[(ii)]
If $B^{-1}D = C$, then $F_{B,D}(M) \cong k$.
\item[(iii)]
If $B^{-1} D$ is not equivalent to $C$, then $F_{B,D}(M)=0$.
\end{itemize}
\end{lem}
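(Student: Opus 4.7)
The plan is to use Lemma \ref{l:evword} to replace every subspace in sight by a subset of the basis $\{b_j : j \in I\}$ of $M=M(C)$. For any $B \in \W_{v,1}$ and $D \in \W_{v,-1}$, that lemma gives bases for $B^\pm(M)$ and $D^\pm(M)$ cut out by the orderings on words; their intersections and sums are therefore also spanned by subsets of $\{b_j\}$, and one reads off that
\[
F_{B,D}^+(M)/F_{B,D}^-(M)
\]
has a basis indexed by those $j \in I$ for which \emph{both} $C(j,1) = B$ and $C(j,-1) = D$. The whole lemma thus reduces to counting such indices.

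For (i), take $B = C(i,1)$ and $D = C(i,-1)$, so $j=i$ itself provides one basis element; the content is to rule out any other $j \ne i$ satisfying $C(j,\pm 1) = C(i,\pm 1)$. Since the two choices in the definition of $C(j,\epsilon)$ have opposite signs, we have $d_j(C,-1) = -d_j(C,1)$, so the analysis splits according to whether $d_j(C,1) = d_i(C,1)$. In the matched case the two equalities unwind to $C_{>i} = C_{>j}$ together with $C_{\le i} = C_{\le j}$ (up to simultaneous inversion), which for finite, $\N$-, or $(-\N)$-words immediately forces $i=j$ on length grounds, while for a $\Z$-word it assembles into $C = C[j-i]$, contradicting the assumption that $C$ is not a periodic $\Z$-word. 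In the mismatched case (which can occur only for $\Z$-words and finite words) the equalities become $C_{>i} = (C_{\le j})^{-1}$ and $(C_{\le i})^{-1} = C_{>j}$, and a routine reindexing collapses these into $C_l = C_{i+j+1-l}^{-1}$ for every relevant $l$. This says $C$ equals $C^{-1}[1-i-j]$ (or simply $C=C^{-1}$ in the finite case), contradicting Lemma \ref{l:symmetry}.

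For (ii) and (iii) I exploit the same basis description. Given $B^{-1}D = C$ in (ii), the composition convention of Section \ref{s:words} places a canonical index $i$ at the boundary between $B^{-1}$ and $D$ (namely $i=0$ when $C$ is a $\Z$-word, and the length of $B^{-1}$ when $C$ is finite, with the evident analogs otherwise); reading off the definitions, $C(i,1) = B$ and $C(i,-1) = D$, so (i) yields $F_{B,D}(M) \cong k$. For (iii), argued contrapositively, any $j$ witnessing $F_{B,D}(M) \ne 0$ produces, according to the sign of $d_j(C,1)$, either $B^{-1}D = C_{\le j} C_{>j} = C[-j]$ or $B^{-1}D = (C[-j])^{-1}$, so $B^{-1}D \sim C$. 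The main obstacle is the case analysis in (i): both the non-periodicity hypothesis and Lemma \ref{l:symmetry} are essential, the latter being precisely what rules out the mismatched sign case that cannot be dispatched on length grounds alone.
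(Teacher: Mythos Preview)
Your proposal is correct and follows essentially the same approach as the paper: both arguments rest on Lemma~\ref{l:evword} to reduce $F_{B,D}(M)$ to the set of indices $j$ with $C(j,1)=B$ and $C(j,-1)=D$, and then invoke non-periodicity together with Lemma~\ref{l:symmetry} to pin down that set. Your case analysis in (i) spells out what the paper compresses into a single sentence, and your treatment of (iii) is slightly more direct than the paper's, which instead routes through the auxiliary filtration $G^\pm_{B,D}$ and its total ordering rather than reading the equivalence $B^{-1}D\sim C$ straight off the witnessing index (minor point: the shift should read $C[j]$ rather than $C[-j]$, though this is irrelevant to $\sim$).
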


\begin{proof}
(i) By Lemma~\ref{l:evword},
\[
F_{C(i,1),C(i,-1)}^+(M) = F_{C(i,1),C(i,-1)}^-(M) \oplus U
\]
where $U$ is spanned the $b_j$ with $C(j,1) = C(i,1)$ and $C(j,-1) = C(i,-1)$.
By Lemma \ref{l:symmetry}, and
since $C$ is not periodic, this condition holds only for $j=i$.

(ii) We have $\{B,D\} = \{C(i,1),C(i,-1)\}$ for some $i$.

(iii) Exchanging $B$ and $D$ if necessary, and letting $v$ be the head of $B$ and $D$, we have $(B,D)\in \W_{v,1}\times\W_{v,-1}$.
Lemma~\ref{l:evword} implies that the spaces $G_{B,D}^\pm (M)$
are spanned by sets of basis elements $b_j$, so
if $F_{B,D}(M)\neq 0$, then some $b_i$ belongs to
$G_{B,D}^+(M)$ but not to $G_{B,D}^-(M)$.
But by (i) we have
\[
b_i \in G_{C(i,1),C(i,-1)}^+(M) \setminus G_{C(i,1),C(i,-1)}^-(M).
\]
Then $(B,D) = (C(i,1),C(i,-1))$ by the total ordering of the $G^\pm_{B,D}$,
so $B^{-1}D$ is equivalent to $C$.
\end{proof}

\begin{lem}
\label{l:mapstring}
Suppose that $C$ is a non-periodic $I$-word.
Suppose that $i\in I$, $B = C(i,1)$ and $D = C(i,-1)$.
Let $M$ be a $\Lambda$-module and consider
$M(C) \otimes_k F_{B,D}(M)$ as a direct sum of copies of $M(C)$ indexed 
by a (possibly infinite) basis of $F_{B,D}(M)$.
Then there is a map $\theta_{B,D,M}:M(C) \otimes_k F_{B,D}(M) \to M$ such
that $F_{B,D}(\theta_{B,D,M})$ is an isomorphism.
\end{lem}

\begin{proof}
Take a basis $(f_\lambda)$ of $F_{B,D}(M)$, and lift the elements 
$f_\lambda$  to elements $m_\lambda \in F_{B,D}^+(M) = B^+(M)\cap D^+(M)$.
In each case there is a $\Lambda$-module map $\theta_\lambda : M(C)\to M$ sending $b_i$ to $m_\lambda$.
These combine to give a map $\theta_{B,D,M}:M(C) \otimes_k F_{B,D}(M) \to M$.
By Lemma~\ref{l:evtwostring}, the map $F_{B,D}(\theta_{B,D,M})$ is an isomorphism.
\end{proof}

\subsection*{Band modules}
Suppose that $C$ is a periodic word of period $n$ and $V$ is a $k[T,T^{-1}]$-module.
The module $M(C,V) = M(C)\otimes_{k[T,T^{-1}]} V$ can be written as
\[
M(C,V) = V_0 \oplus V_1 \oplus \dots \oplus V_{n-1}
\]
where each $V_i = b_i\otimes V$ is identified with a copy of $V$.
(It is a band module provided $V$ is indecomposable.)

\begin{lem}
\label{l:bandeval}
If $D\in \W_{v,\epsilon}$ then
\begin{itemize}
\item[(i)]
$D^+(M) = \bigoplus_{i\in I^+} V_i$,\ \ $I^+ = \{ 0\le i < n : v_i(C)=v, C(i,\epsilon) \le D \}$,
\item[(ii)]
$D^-(M) = \bigoplus_{i\in I^-} V_i$,\ \ $I^- = \{ 0\le i < n : v_i(C)=v, C(i,\epsilon) < D \}$.
\end{itemize}
\end{lem}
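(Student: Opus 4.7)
The plan is to adapt the proof of Lemma \ref{l:evword} to the band setting, the main new feature being that the chase of letters may wrap around the band, picking up powers of $T$ acting on $V$. I will first record the key structural observation: since $C$ has period $n$, in $M = M(C, V) = M(C)\otimes_{k[T,T^{-1}]} V$ one has $T\cdot(b_i\otimes v) = b_{i-n}\otimes v = b_i\otimes Tv$ for $0\le i<n$, so each $V_i$ is a $k[T,T^{-1}]$-submodule via the $T$-action on $V$. Moreover, for an arrow $x$ appearing in $C$ (or whose inverse does), the action of $x$ carries each $V_i$ into a single $V_j$ with $j \equiv i\pm 1 \pmod n$, and the identification $V_j \cong V$ differs from the canonical one by an application of $T^{\pm 1}$ precisely when the index crosses between $0$ and $n-1$.

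As in Lemma \ref{l:evword}, by the total ordering on $\W_{v,\epsilon}$ and Lemma \ref{l:onesideorder}, it suffices to prove two things: that $V_i \subseteq C(i,\epsilon)^+(M)$ for each $0\le i<n$ with $v_i(C) = v$, and that no element of $C(i,\epsilon)^-(M)$ has a non-trivial component in $V_i$. For the first, given $v\in V$, I will exhibit a sequence realising $b_i\otimes v$ as an element of $C(i,\epsilon)^+(M)$: set $m_k = b_{i+dk}\otimes T^{e_k}v$ where $d = d_i(C,\epsilon)$ and $e_k$ records the signed number of times the index $i + dk$ has wrapped around the interval $[0,n)$. Since $T$ is invertible on $V$ the $m_k$ are well-defined and, by the structural observation above, $m_{k-1} \in C(i,\epsilon)_k \cdot m_k$ for each $k\ge 1$; this is exactly what is required by the definition of the $\N$-word functor.

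For the second assertion I imitate the inductive tracking of coefficients from Lemma \ref{l:evword}. If $m\in M$ satisfies $m \in C(i,\epsilon)_{\le n}\cdot m'$ for some $m'$, then writing $m = \sum_j b_j\otimes u_j$ and chasing along $C(i,\epsilon)_1,\dots,C(i,\epsilon)_n$, one finds that the $V_i$-component $u_i$ of $m$ equals, up to a fixed power of $T$ (depending only on $n$ and $i$), the component of $m'$ in a specific $V_{i+dn}$; invertibility of $T$ on $V$ then shows that $u_i\neq 0$ forces the corresponding component of $m'$ to be non-zero. Hence if $m$ lies in $C(i,\epsilon)^-(M)$ — which for an $\N$-word means $m\in C(i,\epsilon)_{\le n}\cdot 0$ for some $n$ long enough to determine the sign-class, or for a finite word means the analogue of Lemma \ref{l:evword}(2) — one concludes inductively that $u_i = 0$.

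I expect the main obstacle to be purely bookkeeping: keeping track of which power of $T$ is picked up as the chase wraps around the band, and verifying that this does not disturb the structure of $D^\pm(M)$ as a sum of whole copies $V_i$. This reduces to the fact that $T$ acts invertibly on $V$, so that membership of $b_i\otimes v$ in $D^\pm(M)$ depends only on $i$ and not on $v$; once this is made explicit, the remainder of the argument is a direct transcription of the proof of Lemma \ref{l:evword}.
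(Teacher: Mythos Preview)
Your approach is correct and is exactly what the paper intends: its entire proof reads ``Similar to Lemma~\ref{l:evword}'', and you have written out precisely those details, the only new ingredient being that the chase along $C(i,\epsilon)$ wraps around the period and picks up powers of the invertible operator $T$ on $V$, so that membership in $D^\pm(M)$ depends only on the index $i$ and not on the vector in $V_i$. Two cosmetic points: you reuse the symbol $n$ both for the period of $C$ and for the truncation index in $C(i,\epsilon)_{\le n}$, and you write $v$ both for the vertex and for an element of $V$; neither affects the argument, but you should disambiguate before finalising.
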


\begin{proof}
Similar to Lemma~\ref{l:evword}.
\end{proof}

\begin{lem}
\label{l:evtwoband}
Let $M = M(C,V)$.
\begin{itemize}
\item[(i)]
If $0\le i<n$, then $F_{C(i,1),C(i,-1)}^+(M) = F_{C(i,1),C(i,-1)}^-(M) \oplus V_i$.
\item[(ii)]
If $B^{-1}D = C$, then $F_{B,D}(M)\cong V$ as $k[T,T^{-1}]$-modules.
\item[(iii)]
If $B^{-1} D$ is not equivalent to $C$ then $F_{B,D}(M(C,V))=0$.
\end{itemize}
\end{lem}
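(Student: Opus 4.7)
The plan is to mirror the proof of Lemma~\ref{l:evtwostring} step-for-step, substituting Lemma~\ref{l:bandeval} for Lemma~\ref{l:evword} wherever the latter is invoked. Thus each of $F_{B,D}^+(M)$ and $F_{B,D}^-(M)$ is computed as a direct sum of certain summands $V_j$ with $0\le j<n$, selected by conditions on the words $C(j,\pm 1)$ in relation to $B$ and $D$.

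For part (i), taking $B=C(i,1)$ and $D=C(i,-1)$ in Lemma~\ref{l:bandeval}, the quotient $F_{B,D}^+(M)/F_{B,D}^-(M)$ is spanned by the $V_j$ with $C(j,1)=C(i,1)$ and $C(j,-1)=C(i,-1)$, equivalently with $\{C_{>j},(C_{\le j})^{-1}\}=\{C_{>i},(C_{\le i})^{-1}\}$. The matching case forces $C[j]=C[i]$, so minimality of the period $n$ plus $0\le i,j<n$ gives $j=i$; the swapped case would force $C$ to equal a shift of $C^{-1}$, which is excluded by Lemma~\ref{l:symmetry}. Only $V_i$ contributes.

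Part (iii) then proceeds exactly as in Lemma~\ref{l:evtwostring}(iii): if $F_{B,D}(M)\neq 0$, Lemma~\ref{l:bandeval} forces some basis element $b_j\otimes v'$ (with $0\le j<n$ and $v'\in V$ nonzero) to lie in $G^+_{B,D}(M)\setminus G^-_{B,D}(M)$, while part (i) places the same vector in $G^+_{C(j,1),C(j,-1)}(M)\setminus G^-_{C(j,1),C(j,-1)}(M)$. The total ordering on pairs then pins down $(B,D)=(C(j,1),C(j,-1))$, so $B^{-1}D\sim C$.

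For part (ii), Lemma~\ref{l:shiftfunctor}(ii) reduces to the case $B^{-1}D=C$, where part (i) identifies $F_{B,D}(M)$ with $V_0\cong V$ as vector spaces. The remaining content is matching the $k[T,T^{-1}]$-structures. Writing $C={}^\infty C_*^\infty$ with $C_*=C_1\ldots C_n$ of length $n$, the $T$-action on $F_{B,D}(M)=C_*^\sharp/C_*^\flat$ is the automorphism induced by $C_*$; the sequence $b_0\otimes v',b_1\otimes v',\ldots,b_n\otimes v'$ witnesses that $b_0\otimes v'\in C_*(b_n\otimes v')$, and the bimodule relation $b_n\otimes v'=b_0\otimes T^{-1}v'$ translates this to the $T$- or $T^{-1}$-action on $V$, according to the sign conventions in force. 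I expect this bookkeeping in (ii) to be the main obstacle; parts (i) and (iii) are routine word combinatorics once Lemma~\ref{l:bandeval} is applied.
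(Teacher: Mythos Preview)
Your proposal is correct and follows essentially the same approach as the paper, whose proof simply reads ``Similar to Lemma~\ref{l:evtwostring}.'' You have correctly identified and handled the two points where the argument differs from the string case: in (i) the matching case $C[j]=C[i]$ is dispatched by minimality of the period rather than by $C$ being non-periodic, and in (ii) the verification of the $k[T,T^{-1}]$-structure via the bimodule relation $b_n\otimes v'=b_0\otimes T^{-1}v'$ is the only genuinely new content (your invocation of Lemma~\ref{l:shiftfunctor}(ii) is harmless but unnecessary, since the hypothesis already gives $B^{-1}D=C$ on the nose).
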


\begin{proof}
Similar to Lemma~\ref{l:evtwostring}.
\end{proof}

\begin{lem}
\label{l:mapband}
Suppose that $C = B^{-1}D$ is a periodic word and $M$ is a $C$-split module.
Let $V = F_{B,D}(M)$. Then there is a homomorphism $\theta_{B,D,M}:M(C,V)\to M$ such
that $F_{B,D}(\theta_{B,D,M})$ is an isomorphism.
\end{lem}

\begin{proof}
We have $D = E^\infty$ and $B = (E^{-1})^\infty$.
Then $V = E^\sharp /E^\flat $, and as a $k[T,T^{-1}]$-module, the action of $T$ is induced by $E$.
By assumption $E^\sharp  = E^\flat \oplus U$, such that $E$ induces
an automorphism on $U$, and of course $U\cong V$.
As in \cite[\S5, Proposition]{Ri1},
one gets a mapping $\theta_{B,D,M}:M(C,V)\to M$ such that
$F_{B,D}(\theta_{B,D,M})$ is an isomorphism.
Namely, there are elements
$u_{r,i}\in M$ for $1\le r\le s$ and $0\le i\le n$
with $u_{1,0},\dots,u_{s,0}$ and $u_{1,n},\dots,u_{s,n}$
bases of $U$ connected by $u_{r,0} = T u_{r,n}$,
and $u_{r,i-1}\in E_i u_{ri}$ for all $r,i$.
Using these elements one defines $\theta_{B,D,M}:M(C,U)\to M$,
sending $b_i\otimes \overline u_{r,0} \in V_i$ for $0\le i<n$
to $u_{r,i}$.
\end{proof}

\section{Direct sums of string and band modules}
\label{s:dirsums}
\begin{thm}
\label{t:summands}
Suppose that $M$ is a direct sum of copies of string modules and modules of the form $M(C,V)$, say
\[
M = \left(\bigoplus_\lambda M(C^{\lambda})\right) \oplus \left( \bigoplus_\mu M(C^\mu,V^\mu) \right).
\]
\begin{itemize}
\item[(i)]
If $B^{-1}D$ is a non-periodic word,
then $\dim F_{B,D}(M)$ is equal to the number of string module summands $M(C^\lambda)$ with $C^\lambda \sim B^{-1}D$.
\item[(ii)]
If $B^{-1}D$ is a periodic word, then $\dim F_{B,D}(M)$ is isomorphic to the direct sum of the $V^\mu$
for $\mu$ such that $C^\mu$ a shift of $B^{-1}D$ and of $\res_\iota V^\mu$ for $\mu$ such that $C^\mu$ is a shift of $D^{-1}B$.
\end{itemize}
\end{thm}

\begin{proof}
Follows immediately from Lemmas~\ref{l:evtwostring} and \ref{l:evtwoband}.
\end{proof}

If $V$ is a finite-dimensional (respectively artinian) $k[T,T^{-1}]$-module, 
we can write it as a finite direct sum of indecomposables $V=V_1\oplus\dots\oplus V_n$,
where the summands are finite dimensional (respectively finite dimensional or injective envelopes of simple modules). 
Thus if $C$ is a periodic word (respectively a direct or inverse periodic word) 
we can write $M(C,V) \cong M(C,V_1)\oplus\dots \oplus M(C,V_n)$, a direct sum of finite-dimensional band modules 
(respectively finite dimensional or primitive injective band modules).

\begin{thm}
\label{t:existdecomp}
Suppose that $M$ is a finitely controlled (respectively pointwise artinian) $\Lambda$-module.
Then there is a homomorphism $\theta:N\to M$ where 
$N$ is a direct sum of string and finite-dimensional band modules 
(respectively a direct sum of string, finite-dimensional band modules and primitive injective band modules)
with the property that $F_{B,D}(\theta)$ is an isomorphism for all refined functors $F_{B,D}$.
\end{thm}

\begin{proof}
If $C = B^{-1}D$ is a non-periodic word, then Lemma \ref{l:mapstring} gives a map $\theta_{B,D,M}$
from a direct sum of copies of $M(C)$ to $M$.
If $C = B^{-1}D$ is periodic word, then $M$ is $C$-split by Lemma~\ref{l:fbdsplit},
and Lemma \ref{l:mapband} gives a map $\theta_{B,D,M}$ from a module of the form $M(C,V)$ to $M$. 
As indicated above, we can decompose $M(C,V) \cong M(C,V_1)\oplus\dots \oplus M(C,V_n)$, 
a direct sum of finite-dimensional band modules 
(or finite dimensional and primitive injective band modules).
Let $N$ be the direct sum of all of these string and band modules 
as $(B,D)$ runs through pairs in such a way that $C=B^{-1}D$ runs through the equivalence classes of words, once each.
The maps $\theta_{B,D,M}$ combine to give a map $\theta:N\to M$ with $F_{B,D}(\theta)$ an isomorphism
for all these pairs, and hence for any pair of words $B,D$ with the same head and opposite signs.
\end{proof}

\begin{lem}
\label{l:contprimtor}
Suppose $\theta:N\to M$ is a homomorphism, with $M$ finitely controlled
and such that $F_{B,D}(\theta)$ is an isomorphism for all refined functors $F_{B,D}$. 
Then $\Ima(\theta)$ contains the primitive torsion submodule $\tau^1(M)$ of $M$.
\end{lem}

\begin{proof}
By Lemma~\ref{l:tauprimed} it suffices to show
$\tau_P^1(e_v M)\subseteq \Ima(\theta)$ for $P$ a primitive cycle with head $v$.
Let $m\in \tau_P^1(e_v M)$. By Lemma~\ref{l:ptorsion},
\[
m\in P'' = P'' \cap (P^{-1})'' = F_{B,D}^+(M)
\]
where $B = (P^{-1})^\infty$ and $D = P^\infty$.
Thus by hypothesis $m = m'+\theta(n)$ for some $n\in N$
and
\[
m'\in F_{B,D}^-(M) = (P' \cap (P^{-1})'') +(P''\cap (P^{-1})').
\]
Now $P'=0$ since $P$ is direct, and $P''\cap (P^{-1})' = \tau_P^1(e_v M) \cap \tau_P^0(e_v M) = 0$.
Thus $m'=0$, so $m=\theta(n)\in\Ima(\theta)$.
\end{proof}

\begin{lem}
\label{l:sbmono}
Suppose $\theta:N\to M$ is a homomorphism, with $N$ a direct sum of string and band modules
and such that $F_{B,D}(\theta)$ is an isomorphism for all refined functors $F_{B,D}$. 
Then $\theta$ is injective.
\end{lem}

\begin{proof}
Suppose that $n$ is a non-zero element of $e_v N$ with $\theta(n) = 0$. 
We can write $n$ as a sum of components in different summands of $N$. 
Let $S$ be one of these summands.
If $S$ is a string module, the component can be written as a linear combination of the basis elements $b_i$, 
and if $S$ is a band module $M(C,V)$, the component can be written as a sum of elements in the vector spaces $V_i$. 
By Lemmas~\ref{l:evtwostring} and~\ref{l:evtwoband}, 
there is $(B,D) \in \W_{v,1}\times \W_{v,-1}$
with $F_{B,D}^+(S) = F_{B,D}^-(S)\oplus U$
where $U = kb_i$ or $V_i$. 
It follows that $G_{B,D}^+(S) = G_{B,D}^-(S)\oplus U$.
Only finitely many $b_i$ and $V_i$ from finitely many summands $S$ of $N$ 
make a non-zero contribution to $n$, and among the finitely many pairs $(B,D)$ 
which arise, choose $B$ maximal, and for the pairs with this $B$, choose $D$ maximal. 
Then $n$ is in $G_{B,D}^+(N)$ but not in $G_{B,D}^-(N)$. 
But this means that $n$ induces a non-zero element of $F_{B,D}(N)$.
Thus by assumption $\theta(n)$ induces a non-zero element of $F_{B,D}(M)$.
But this is impossible since $\theta(n)=0$.
\end{proof}

\section{Covering property}
\label{s:cover}
\begin{lem}
\label{l:cplustrunc}
Let $C$ be an $\N$-word and $M$ a $\Lambda$-module. If
\begin{itemize}
\item[(i)] $M$ is pointwise artinian, or
\item[(ii)] $M$ is finitely controlled and $C$ is not (direct and repeating),
\end{itemize}
then the descending chain
$
C_{\le 1}M \supseteq C_{\le 2}M \supseteq C_{\le 3}M \supseteq \dots
$
stabilizes.
\end{lem}

\begin{proof}
Case (i) is clear. For case (ii), suppose that $C$ is direct.
If $P$ is a primitive cycle with the same head as $C$ and length $r$,
then the first letter $C_1$ cannot be the same as $P_1$,
for that would force $C = P^\infty$, which is direct and repeating.
Thus $P_r C_1 = 0$ in $\Lambda$, so $P C_1 M = 0$. It follows that $C_1 M\subseteq Z$,
where $Z = \{ m\in e_v M : zm=0\}$ and $v$ is the head of $C$.
The hypothesis on $M$ ensures that $Z$ is finite dimensional, 
so the terms in the descending chain are finite-dimensional, so it must stabilize.

If $C$ is eventually inverse the chain stabilizes at $C_{\le n} M$
with $n$ chosen so that $C_{>n}$ is inverse.

Thus we may suppose $C$ is not direct and not eventually inverse.
It follows that $C= D x^{-1}y B$ for some words $D,B$,
and distinct arrows $x,y$, say with head $v$.
We need the chain $D x^{-1} y B_{\le n}M$ to stabilize. 
This holds since $Dx^{-1} y B_{\le n}M = Dx^{-1}(xM \cap y B_{\le n}M)$,
and $xM \cap y B_{\le n}M$ is finite dimensional by Lemma~\ref{l:fdintersection},
so the chain $xM \cap y B_{\le n}M$ stabilizes.
\end{proof}

\begin{lem}[Realization lemma]
\label{l:realize}
If $M$ is finitely controlled or pointwise artinian and $C$ is an $\N$-word, then $C^+(M) = \bigcap_{n\ge 0} C_{\le n} M$.
\end{lem}

\begin{proof}
It suffices to show that if $\ell D$ is a word with $\ell$ a letter, then
\[
\bigcap_{n\ge 0} \ell D_{\le n} M \subseteq \ell \left( \bigcap_{n\ge 0} D_{\le n} M\right).
\]
This is trivial if $\ell$ is an inverse letter, so suppose $\ell$ is a direct letter.
If the descending chain $D_{\le n}M$ stabilizes, the result is clear.
Thus by Lemma~\ref{l:cplustrunc} we may suppose
that $D$ is direct and repeating.
Then, since $\ell$ is a direct letter, so is $\ell D$.
Thus $\ell D = P^\infty$ for a primitive cycle $P = \ell B$.
Then 
\[
\bigcap_{n\ge 0} \ell D_{\le n} M = 
\bigcap_{m\ge 0} P^m M
\]
and by Lemma~\ref{l:ptorsion} this is contained in
\[
P \left( \bigcap_{m\ge 0} P^m M \right)
\subseteq \ell \left( \bigcap_{m\ge 0} BP^m M \right) = \ell \left( \bigcap_{n\ge 0} D_{\le n} M \right).
\]
\end{proof}

\begin{lem}[Weak covering property]
\label{l:covering}
Let $M$ be a $\Lambda$-module,
let $v$ be a vertex and $\epsilon=\pm1$.
Suppose that $S$ is a non-empty subset of $e_v M$ with $0\notin S$.
Then there is a word $C\in \W_{v,\epsilon}$ such that
either (a) $C$ is finite and
$S$ meets $C^+(M)$ but does not meet $C^-(M)$,
or (b) $C$ is an $\N$-word and
$S$ meets $C_{\le n}M$ for all $n$ but does not meet $C^-(M)$.
\end{lem}

\begin{proof}
Suppose there is no finite word $C\in \W_{v,\epsilon}$ such that $S$ meets $C^+(M)$ but
not $C^-(M)$.
Starting with the trivial word $1_{v,\epsilon}$,
we iteratively construct an $\N$-word $C\in \W_{v,\epsilon}$
such that $S$ meets $C_{\le n}M$ but not $C_{\le n}0$.
Suppose we have constructed $D = C_{\le n}$.
If there is a letter $y$ with $D y$ a word, and $S$ meets $D y M$, then
we define $C_{n+1} = y$ and repeat. Otherwise $S$ does not meet $D^-(M)$.
If there is a letter $x$ with $D x^{-1}$ a word, and $S$ does not meet $D x^{-1} 0$
then we define $C_{n+1} =  x^{-1}$ and repeat. Otherwise $S$ meets $D^+(M)$.
By our assumption, one of these two possibilities must occur.
\end{proof}

\begin{lem}[Covering property for one-sided functors]
Let $M$ be a $\Lambda$-module,
let $v$ be a vertex and $\epsilon=\pm1$.
Suppose $U$ is a $k[z]$-submodule of $e_v M$, $H$ is a subset of $e_v M$ and $m\in H\setminus U$.
Suppose that either $M$ is pointwise artinian, or that $M$ is finitely controlled and $z e_v M \subseteq U$.
Then there is a word $C\in \W_{v,\epsilon}$ such that $H\cap (U+m)$ meets $C^+(M)$ but does not meet $C^-(M)$.
\end{lem}

\begin{proof}
The set $S = H \cap (U+m)$ contains $m$ but not 0, so the weak covering property gives a word $C$ such
that $S$ does not meet $C^-(M)$. If $C$ is a finite word, then $S$ meets $C^+(M)$, as required.
If $C$ is an $\N$-word, and $C$ is not direct and repeating, then by Lemma \ref{l:cplustrunc}
and the realization lemma, $S$ doesn't meet $C^+(M)$, as required.
Thus suppose $C$ is direct and repeating. Then $C = P^\infty$ for some primitive cycle $P$.
Then $U+m$ meets $P^2 M = z P M \subseteq U$, contradicting that $m\notin U$.
\end{proof}

\begin{lem}[Covering property for refined functors]
\label{l:refinedcovering}
Let $M$ be a $\Lambda$-module, and let $v$ be a vertex.
Suppose $U$ is a $k[z]$-submodule of $e_v M$ and $m\in e_v M\setminus U$.
Suppose that either $M$ is pointwise artinian, or that $M$ is finitely controlled and $z e_v M \subseteq U$.
Then $U+m$ meets $G^+_{B,D}(M)$ but not $G^-_{B,D}(M)$
for some $(B,D)$.
\end{lem}

\begin{proof}
By the covering property for one-sided functors, with $H = e_v M$ there is $B$ with head $v$ and of the correct sign such
that $U+m$ meets $B^+(M)$ but not $B^-(M)$.
Then we can write $U+m = U+m'$ for some $m'\in B^+(M)$.
Letting $U' = U+B^-(M)$ we have $m'\notin U'$.
We now apply the covering property for one-sided functors with the submodule $U'$ and $H = B^+(M)$ and the element $m'$,
to get a word $D$ with head $v$ and of the right sign, such that $B^+(M) \cap (U'+m')$ meets $D^+(M)$ but not $D^-(M)$.
It follows that $U+m$ meets $G^+_{B,D}(M)$ but not $G^-_{B,D}(M)$.
\end{proof}

\begin{lem}
\label{l:cokprop}
Suppose $\theta:N\to M$ is a homomorphism such that $F_{B,D}(\theta)$ is an isomorphism for all refined functors $F_{B,D}$. 
\begin{itemize}
\item[(i)]
If $M$ is pointwise artinian, then $\theta$ is surjective.
\item[(ii)]
If $M$ is finitely controlled, then the cokernel of $\theta$ is primitive torsion.
\end{itemize}
\end{lem}

\begin{proof}
In case (i), if $\theta$ is not surjective, say $e_v \Ima(\theta) \neq e_v M$, let $U = e_v \Ima(\theta)$
and choose $m\in e_v M\setminus U$.
In case (ii), if the cokernel of $\theta$ is not primitive torsion, choose a vertex $v$ with $e_v M/ e_v \Ima(\theta)$ not primitive torsion.
Then this module has a 1-dimensional quotient killed by $z$, so there is a $k[z]$-submodule $U$ of codimension 1 in $e_v M$ with
$e_v \Ima(\theta)\subseteq U$ and $z e_v M \subseteq U$. Choose $m\in e_v M \setminus U$.

The covering property for refined functors gives $B,D$ such that $U+m$ meets $G_{B,D}^+(M)$ but not $G_{B,D}^-(M)$.
Thus there are $u\in U$, $b\in B^-(M)$ and $d\in B^+(M)\cap D^+(M)$ such that $u+m = b+d$.
Since $\theta$ induces an isomorphism in refined functors, there is $n\in e_v N$ with $d = \theta(n) + c + c'$ with $c\in D^-(M)\cap B^+(M)$
and $c' \in D^+(M)\cap B^-(M)$.
Then $\theta(n)\in U$, so $U+m$ contains $b+c+c'$, so it meets $G_{B,D}^-(M)$, a contradiction.
\end{proof}

\begin{proof}[Proof of Theorem~\ref{t:mainartinian}]
The map $\theta:N\to M$ of Theorem~\ref{t:existdecomp} is 
injective by Lemma~\ref{l:sbmono} and surjective by Lemma~\ref{l:cokprop}, so an isomorphism.
\end{proof}

\section{Extensions by a primitive simple}
\label{s:primsimpleexts}
We fix a \emph{primitive simple} $S$ for $\Lambda$, that is, a simple, primitive torsion module.
It is easy to see (for example using Theorem~\ref{t:mainartinian})
that it is of the form $S = M({}^\infty P^\infty,V)$
where $P$ is a primitive cycle, say with head $v$, sign $\epsilon$ and length $p$,
and $V$ is a simple $k[T,T^{-1}]$-module,
so of the form $V = k[T,T^{-1}]/(f(T))$ where $f(T)$ is an irreducible polynomial in $k[T]$ with $f(0)=1$.
Since $P$ has sign $\epsilon$, it follows that $P^{-1}$ and $(P^{-1})^\infty$ have sign $-\epsilon$.

\begin{dfn}
Let $C$ be an $I$-word.
We say that $i\in I$ is \emph{$P$-deep for $C$} if $C(i,-\epsilon) = (P^{-1})^\infty$.
Equivalently if the basis element $b_i$ in $M(C)$ is not killed by any power of $P$.
We say that $i\in I$ is a \emph{$P$-peak for $C$} if it is $P$-deep for $C$ and $C(i,\epsilon)$ is not of the form $PD$ for some word $D$.
Equivalently, it is $P$-deep for $C$ and $b_i$ is not in $P M(C)$.
\end{dfn}

Clearly only an infinite word can have a $P$-peak, and then it has at most two $P$-peaks (and if so it is a $\Z$-word).
Our aim in this section is to prove the following result.

\begin{thm}[Extension Theorem]
\label{t:extension}
Suppose that $M$ is a finitely controlled $\Lambda$-module
and $N$ is a submodule of $M$ with $\tau^1(M) \subseteq N$
and $M/N\cong S$.
Suppose that $N$ is a direct sum of string and finite-dimensional band modules,
\[
N = \bigoplus_{\lambda\in \Phi} N_\lambda,
\]
indexed by some set $\Phi$.
Then there is some $\mu\in\Phi$ with
$N_\mu$ of the form $M(C)$ for some word $C$,
which has a $P$-peak,
such that $M = N'_\mu \oplus N'$ where
\[
N' = \bigoplus_{\lambda\in \Phi\smallsetminus\{\mu\}} N_\lambda,
\]
and $N_\mu'$ is a submodule of $M$ with $N'_\mu \cong N_\mu$.
\end{thm}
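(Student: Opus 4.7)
The plan is to construct a lift $m^*$ of a generator of $S$ that sits in a controlled refined-functor subspace of $M$, use this to locate the summand $N_\mu$, and then modify $N_\mu$ by basis corrections derived from $m^*$. Set $B = (P^{-1})^\infty \in \W_{v,-\epsilon}$, so that $B^+(L) = e_v L$ for every module $L$, and fix $\bar{b}_0 = b_0 \otimes \bar{b} \in e_v S$ for a chosen generator $\bar{b}$ of $V = k[T,T^{-1}]/(f(T))$; then $\bar{b}_0$ generates $S$ and lies in $P^k S$ for every $k$. Starting from any lift $m \in e_v M$ of $\bar{b}_0$, I apply Lemma~\ref{l:covering} to the coset $m + N$ (non-empty and avoiding $0$). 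Using Lemma~\ref{l:cplustrunc} to collapse the non-(direct and repeating) $\N$-word case, and Lemma~\ref{l:Krull}(ii) to handle $D = P^\infty$ (which is compatible with the $P$-depth of $\bar{b}_0$), I obtain $m^* \in m + N$ with $m^* \in D^+(M) \setminus D^-(M)$ for some $D \in \W_{v,\epsilon}$. If $D$ begins with $P$ then $m^* \in PM$, and I iteratively factor out $P$ (each step replaces $\bar{b}_0$ by $T^{-1}\bar{b}_0$, still a generator of $S$); the process must terminate, for otherwise $m \in \bigcap_n P^n M = \tau_P^1(M) \subseteq \tau^1(M) \subseteq N$ by Lemma~\ref{l:Krull}(i), contradicting $m \notin N$. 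So $D$ may be taken not of the form $PD'$, and since $B^+(M) = e_v M$ the element $m^*$ represents a nonzero class in $F_{B,D}(M)$.

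The word $B^{-1}D$ is an infinite non-periodic $\Z$-word having a $P$-peak at position $0$: its backward part is repeated $P$, while $D$ does not begin with $P$. Hence $F_{B,D}(S) = 0$ by Lemma~\ref{l:evtwoband}(iii), and by Lemmas~\ref{l:evtwostring}(iii) and~\ref{l:evtwoband}(iii) the summands $N_\lambda$ of $N$ contributing nontrivially to $F_{B,D}(N)$ are precisely the string modules $M(C_\lambda)$ with $C_\lambda \sim B^{-1}D$; each such $C_\lambda$ has a $P$-peak. Using the hypothesis $\tau^1(M) \subseteq N$, which prevents lifting $\bar{b}_0$ through primitive torsion, I would show that the nonzero class $[m^*] \in F_{B,D}(M)$ is represented by the $P$-peak basis element $b_i$ of some summand $N_\mu = M(C)$ with $C \sim B^{-1}D$. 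After rescaling and subtracting elements of $N$ (the projections to the remaining summands) and of $F_{B,D}^-(M)$ from $m^*$ --- each modification preserving the lifting property --- I arrange that $m^* - b_i \in F_{B,D}^-(M)$.

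The module $N'_\mu$ is then built by coordinated basis correction. For each basis vector $b_j$ of $N_\mu = M(C)$, let $W_j$ be the word in the letters of $\Lambda$ giving the canonical path from $b_i$ to $b_j$ within $M(C)$, and define $b'_j := b_j + W_j \cdot m^* \in M$ (the second summand being the $\Lambda$-action on $m^*$). In particular $b'_i = b_i + m^*$ and $b'_{i-kp} = b_{i-kp} + P^k m^*$ along the $P$-tower. The matching of $m^*$ with the forward part of $C$ past position $i$ (via $D$) and with the backward $P$-tower (via the $P$-depth of $b_i$) ensures that the $\Lambda$-action on $\{b'_j\}$ mirrors that on $\{b_j\}$, so $N'_\mu := \langle b'_j \rangle \subseteq M$ is isomorphic to $N_\mu$ via the bijection $b_j \leftrightarrow b'_j$. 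Finally, $M = N'_\mu \oplus N'$: the sum contains $b'_i - b_i = m^*$, a lift of a generator of $S$, so $N'_\mu + N' \supseteq N + \Lambda m^* = M$; and any element of $N'_\mu \cap N'$ has vanishing $m^*$-component after projection to $S = M/N$, which forces all its $b'_j$-coefficients to be zero.

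The central difficulty lies in the lifting assertion in the middle step: showing that the nonzero class $[m^*] \in F_{B,D}(M)$ is actually represented by an element of $F_{B,D}(N)$, which in particular identifies $\mu$ as a string-module summand. This is a form of exactness for $F_{B,D}$ at our specific extension, and the hypothesis $\tau^1(M) \subseteq N$ is precisely what makes it go through, by blocking any primitive-torsion shortcut from $\bar{b}_0$ back into $M$. The consistency of the basis correction $b_j \mapsto b'_j$ is the secondary technical hurdle, requiring the careful matching of $D$ with the forward part of $C$ past the peak.
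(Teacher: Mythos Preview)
Your approach via refined functors and explicit basis corrections is quite different from the paper's, and the step you flag as the ``central difficulty'' is a genuine gap that I do not see how to close along these lines. You produce $[m^*]\neq 0$ in $F_{B,D}(M)$ and observe that the only summands of $N$ contributing to $F_{B,D}(N)$ are string modules $M(C_\lambda)$ with $C_\lambda\sim B^{-1}D$; but nothing forces such a summand to exist. The functors $F_{B,D}$ are not exact on short exact sequences, so $F_{B,D}(S)=0$ does not make $F_{B,D}(N)\to F_{B,D}(M)$ surjective, and the hypothesis $\tau^1(M)\subseteq N$ does not obviously bridge this. Even granting the existence of $\mu$ and of the map $\psi:M(C)\to M$ with $\psi(b_i)=m^*$, your directness check fails: an element $\sum_j\alpha_j b'_j\in N'_\mu$ that projects to zero in $S$ only has its coefficients along the $P$-tower constrained (and only modulo the relation coming from $f(T)$), while the coefficients in the $D$-direction are completely free; so vanishing in $S$ does not force all $\alpha_j=0$, and you have given no reason why $\ker(N'_\mu\to S)$ should avoid $N'$. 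The sum $N'_\mu+N'=M$ is also not verified: you get $M=N_\mu+N'+N'_\mu$, but eliminating $N_\mu$ requires knowing where the corrections $\psi(b_j)$ land, which you do not control.

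The paper proceeds entirely differently. It computes $\Ext^1(S,-)$ from the projective resolution $0\to\Lambda e_v\xrightarrow{f(P)}\Lambda e_v\to S\to 0$, shows that $\Ext^1(S,M(C))$ has an $\End(S)$-basis $\{\xi_{C,i}\}$ indexed by the $P$-peaks of $C$ (Lemma~\ref{l:stringextbasis}), and writes the extension class $\zeta$ of $0\to N\to M\to S\to 0$ as a finite sum over pairs $(\lambda,i)$. The pair $(\mu,j)$ is then chosen so that the $P$-class $(C^\mu,j)$ is \emph{maximal} among those occurring. The crucial ingredient is Lemma~\ref{l:peakhom}: for $(C,i)>(D,j)$ there is a homomorphism $\theta_{ij}:M(C)\to M(D)$ with $(\theta_{ij})_*(\xi_{C,i})=\xi_{D,j}$, and $\theta_{ij}^2=0$ when $C=D$. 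These transfer maps (together with Lemma~\ref{l:extswap} and Lemma~\ref{l:extendband} for the band summands) assemble into $\phi:N_\mu\to N$ with $\phi_*(\xi_{C^\mu,j}a_{\mu j})=\zeta$; the maximality of $(C^\mu,j)$ and the nilpotence clause make the component $\phi_\mu$ unipotent, hence $\phi$ is a split monomorphism with complement exactly $N'$. The decomposition $M=N'\oplus N'_\mu$ then drops out of the pushout diagram, with $N'_\mu$ the image of the middle term $E\cong M(C^\mu)$. Your construction has no analogue of the maximal choice or of the transfer maps $\theta_{ij}$, and that is why neither the location of $\mu$ nor the directness of the sum can be completed.
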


The following is straightforward.

\begin{lem}
\label{l:projres}
There is a projective resolution
\[
0 \to \Lambda e_v \to \Lambda e_v \to S\to 0
\]
where the first map is right multiplication by $f(P)$.
\end{lem}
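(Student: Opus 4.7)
The plan is to use the projective resolution of Lemma~\ref{l:projres} to compute $\Ext^1(S, N)$, locate the non-zero class across the summands of $N$, and construct the splitting at a chosen $P$-peak. Applying $\Hom_\Lambda(-, N)$ to $0 \to \Lambda e_v \xrightarrow{\cdot f(P)} \Lambda e_v \to S \to 0$ identifies $\Ext^1(S, N) \cong e_v N / f(P) e_v N$; the given extension is represented by the class $\bar n$ where $n = f(P) \tilde u$ for any $\tilde u \in e_v M$ mapping to a generator $u$ of $S$. The hypothesis $\tau^1(M) \subseteq N$ combined with $S$ being primitive torsion forces $\bar n \neq 0$, since otherwise the sequence would split and $S$ would embed as a primitive torsion summand outside $N$.

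Since the resolution has finitely generated projectives, $\Ext^1(S,-)$ commutes with the direct sum $N = \bigoplus_\lambda N_\lambda$, so $\bar n = (\bar n_\lambda)$ with finite support. A case analysis shows that $e_v N_\lambda / f(P) e_v N_\lambda$ vanishes unless $N_\lambda$ is either (a) a string $M(C)$ with $C$ having a $P$-peak, or (b) a band $M({}^\infty P^\infty, V)$ with $V$ annihilated by a power of $f(T)$. For the complement of (a): if $C$ has no $P$-peak, any basis element with an infinite descending $P$-tower is $P$-deep, and the lack of a $P$-peak would force an infinite ascending $P$-tower as well, making $C$ the periodic word ${}^\infty P^\infty$—excluded because $M(C)$ is an indecomposable string module—so every element of $e_v M(C)$ is $P$-nilpotent and $f(P) = 1 + g(P)P$ is invertible via $f(P)^{-1} = 1 - g(P)P + g(P)^2 P^2 - \cdots$. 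For the complement of (b), $P$ acts nilpotently on any other band by the same reasoning, so $f(P)$ is again invertible. The hypothesis then excludes the case where all contributing $\lambda$ are of type (b): if so, $n$ would be primitive torsion, some $h(P)$ with $h(0)=1$ would annihilate it, yielding $g(P)\tilde u = 0$ for $g = fh$ with $g(0) = 1$, forcing $\tilde u \in \tau^1(M) \subseteq N$ and contradicting $\pi(\tilde u) \neq 0$. Therefore at least one $\mu$ has $N_\mu = M(C)$ of type (a), giving the first assertion.

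For the splitting, fix such $\mu$ with $P$-peak at index $i$ and peak basis element $b_i$. After adjusting $\tilde u$ by elements of $N$ to simplify the representatives $n_\lambda$ within their $f(P)$-cosets, one defines $N'_\mu \subseteq M$ as the submodule that re-embeds $N_\mu = M(C)$ by replacing the peak generator $b_i$ with $\tilde u$, modified by a correction from $\bigoplus_{\lambda \neq \mu} N_\lambda$ that absorbs the non-$\mu$ components of $n$. The $P$-peak property—that $b_i$ sits at the top of an infinite descending $P$-tower with no $P$-predecessor in $N_\mu$—is essential for this substitution to respect the defining relations of $M(C)$ and yield $N'_\mu \cong N_\mu$. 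The sum $N'_\mu + N' = M$ follows from $N + \Lambda \tilde u = M$, and the intersection $N'_\mu \cap N' = 0$ follows from a direct computation using the $P$-peak structure together with the fact that $\tilde u$ is not primitive torsion. The hardest step is this last construction when several $\bar n_\lambda$ are non-zero: the correction term becomes a non-trivial linear combination, and verifying that the defining word-relations of $M(C)$ still hold in $N'_\mu$ without creating spurious new relations requires a careful matrix-reduction-style argument to identify the correct cyclic generator of $N'_\mu$.
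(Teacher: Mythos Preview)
Your proposal does not address the stated lemma at all. Lemma~\ref{l:projres} asks you to verify that
\[
0 \longrightarrow \Lambda e_v \xrightarrow{\;\cdot f(P)\;} \Lambda e_v \longrightarrow S \longrightarrow 0
\]
is a projective resolution of the primitive simple $S$: that $\Lambda e_v$ is projective, that $\Lambda e_v \to S$ is onto, that its kernel is exactly $\Lambda e_v\, f(P)$, and that right multiplication by $f(P)$ is injective on $\Lambda e_v$. The paper records this as ``straightforward'' and gives no proof; the content is a direct check using the description $S = M({}^\infty P^\infty, k[T,T^{-1}]/(f(T)))$ and the structure of $e_v \Lambda e_v$ in terms of the at most two primitive cycles through $v$.

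What you have written is instead a sketch of a proof of Theorem~\ref{t:extension}, the main result of the section, and your very first sentence already \emph{invokes} Lemma~\ref{l:projres} as a tool. So there is a complete mismatch between the target statement and your argument.

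If your intention was in fact to prove Theorem~\ref{t:extension}, then your outline is in the right spirit but diverges from the paper in a significant way. The paper does not attempt to build $N'_\mu$ by hand from a modified generator $\tilde u$; instead it organises $\Ext^1(S,N_\lambda)$ via the basis elements $\xi_{C^\lambda,i}$ (Lemma~\ref{l:stringextbasis}), totally orders the $P$-classes, and uses Lemmas~\ref{l:extswap}, \ref{l:peakhom} and \ref{l:extendband} to manufacture a split monomorphism $\phi:N_\mu\to N$ carrying $\xi_{C^\mu,j}a_{\mu j}$ to the extension class $\zeta$; the decomposition $M=N'\oplus \Ima(t)$ then drops out of a single pushout diagram. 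Your approach of directly replacing the peak generator and ``absorbing'' the other components is where your own write-up admits the difficulty lies, and you have not supplied the mechanism (analogous to the ordering on $P$-classes and the nilpotency in Lemma~\ref{l:peakhom}) that guarantees the correction terms can be chosen consistently. Without that, the ``matrix-reduction-style argument'' you allude to is not yet an argument.
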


For any $\Lambda$-module $M$, the resolution of $S$ gives an exact sequence
\[
0\longrightarrow \Hom(S,M)\longrightarrow e_v M\xrightarrow{f(P)} e_v M \xrightarrow{\alpha_M} \Ext^1(S,M)\longrightarrow 0.
\]
We denote the pullback of $\xi\in\Ext^1(S,M)$ along $a\in\End(S)$ by $\xi a$,
and if $\theta:M\to N$ is a homomorphism, we denote the pushout map $\Ext^1(S,M)\to \Ext^1(S,N)$ by $\theta_*$.

\begin{lem}
\label{l:extswap}
If $a\in\End(S)$ and $\xi\in\Ext^1(S,M)$, then $\xi a = \psi_*(\xi)$ for some $\psi$
in the centre of $\End(M)$.
\end{lem}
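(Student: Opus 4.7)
My plan is to use the projective resolution from Lemma~\ref{l:projres} to realise both $\xi a$ and $\psi_*(\xi)$ as concrete elements of $\Ext^1(S,M) = e_v M / f(P) e_v M$, and then take for $\psi$ a suitable polynomial in the central element $z$.

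First, I would identify $\End(S)$. By Lemma~\ref{l:evtwoband}(ii) applied to $S = M({}^\infty P^\infty, V)$ with $V = k[T,T^{-1}]/(f(T))$, we have $\End(S) = \End_{k[T,T^{-1}]}(V) = k[T]/(f(T))$. A direct check from the construction of the band module shows that the action of $T$ on $e_v S$ coincides with that of $P$: with $s = b_0\otimes 1$, we have $P s = b_{-p}\otimes 1 = b_0\cdot T\otimes 1 = b_0\otimes T = Ts$. So $a$ is given by some polynomial $g(T)\in k[T]$ and acts on $e_v S$ as multiplication by $g(P)$.

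Next, I would compute $\xi a$ through the resolution. A $\Lambda$-endomorphism of $\Lambda e_v$ is right multiplication by an element of $e_v\Lambda e_v$, and the lift of $a$ to the resolution may be taken to be right multiplication by $g(P)$, since this descends to $a$ on $S$. Applying $\Hom_\Lambda(-,M)$ turns this into left multiplication by $g(P)$ on $e_v M$, so if $\xi$ is represented by $m\in e_v M$ in the identification $\Ext^1(S,M) = e_v M/f(P)e_v M$, then $\xi a$ is represented by $g(P)m$. Now set $\psi = g(z)$, viewed as a $\Lambda$-endomorphism of $M$ via Lemma~\ref{l:kzaction}(i). Since any $\Lambda$-endomorphism of $M$ preserves each $e_v M$ and commutes with every primitive cycle, $k[z]$ lies in the centre of $\End(M)$. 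It remains to check $\psi(m)\equiv g(P)m\pmod{f(P)e_v M}$.

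If $P$ is the only primitive cycle at $v$, then $z$ acts on $e_v M$ as $P$ and $\psi(m) = g(P)m$ exactly. If there is a second primitive cycle $R$ at $v$, the relations $PR=RP=0$ in $\Lambda$ give $(P+R)^k = P^k + R^k$ for $k\ge 1$, so
\[
\psi(m) = g(P+R)m = g(P)m + \bigl(g(R)-g(0)\bigr)m.
\]
Write $g(T)-g(0) = Th_0(T)$ and, using $f(0)=1$, write $f(P) = 1 + Pf_1(P)$. Then $f(P)R = R + Pf_1(P)R = R$ in $\Lambda$, because $PR=0$, and consequently $Rn\in f(P)e_v M$ for every $n\in e_v M$. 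Applied with $n = h_0(R)m$, this shows that $(g(R)-g(0))m\in f(P)e_v M$, as required.

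The only real content is the identity $f(P)R = R$, which rests on $PR=0$ together with $f(0)=1$; this is what allows the global, central choice $\psi = g(z)$ to behave, modulo $f(P)e_v M$, like the purely local multiplication by $g(P)$ on $e_v M$. A more naive attempt to promote multiplication by $g(P)$ on $e_v M$ directly to a $\Lambda$-endomorphism of $M$ would fail because $g(P)$ is not central in $\Lambda$, so routing the construction through $k[z]$ is what builds in the compatibility at every vertex simultaneously.
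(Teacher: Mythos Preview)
Your proof is correct. It reaches the same destination as the paper's argument—taking $\psi$ to be a polynomial in the central element $z$—but by a more hands-on route. The paper argues abstractly: the action of $k[z]$ on any module is by central endomorphisms, so the induced left and right actions of $k[z]$ on $\Hom(N,M)$, and hence on $\Ext^1(N,M)$, coincide; combined with the identification $\End(S)\cong k[z]/(f(z))$, writing $a=\gamma_S(h(z))$ and $\psi=\gamma_M(h(z))$ gives $\xi a=\psi_*(\xi)$ immediately, with no computation in the resolution. Your argument instead fixes the identification $\End(S)\cong k[T]/(f(T))$, lifts $a=g(T)$ to right multiplication by $g(P)$ on the resolution (for this only the equality $g(P)s=a(s)$ on the single generator $s=b_0\otimes 1$ is needed, which is exactly what you check), and then reconciles $g(P)m$ with $g(z_v)m=g(P+R)m$ using the identity $f(P)R=R$. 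The paper's approach is cleaner and avoids ever separating $P$ from $R$; yours is more explicit and makes visible exactly where the hypothesis $f(0)=1$ enters. One small point: your appeal to Lemma~\ref{l:evtwoband}(ii) for $\End(S)\cong k[T]/(f(T))$ is a little indirect—the functor $F_{B,D}$ only gives a map $\End_\Lambda(S)\to\End_{k[T,T^{-1}]}(V)$, and you need Schur's lemma (or a dimension count via the resolution) to see it is an isomorphism—but the conclusion is certainly right.
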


\begin{proof}
For any $\Lambda$-module $M$, the action of $k[z]$ on $M$ defines a homomorphism
$\gamma_M:k[z]\to \End(M)$.
If $N$ is another $\Lambda$-module, the actions of $k[z]$ on $M$ and $N$ induce
left and right actions of $k[z]$ on $\Hom(N,M)$, but these are the same since
the action of $z$ on $e_v M$ or $e_v N$ is given by multiplication by $z_v\in \Lambda$.
Using a projective resolution of $N$, the same
holds for the two actions of $k[z]$ on $\Ext^1(N,M)$.
It is clear that $\gamma_S$ induces an isomorphism
\[
k[z]/(f(z)) \cong \End(S).
\]
Thus, writing
$a = \gamma_S(h(z))$ for some $h(z)\in k[z]$, we can take $\psi = \gamma_M(h(z))$.
It is central by the discussion above.
\end{proof}

If $C$ is an $I$-word and $i$ is a $P$-peak for $C$, consider the exact sequence
\[
\xi_{C,i} : 0 \to M(C) \to E_{C,i} \to S\to 0
\]
formed from the pushout of the projective resolution in Lemma~\ref{l:projres}
along the homomorphism $\Lambda e_v\to M(C)$ sending $e_v$ to $b_i$.
Thus
\[
\xi_{C,i} = \alpha_{M(C)}(b_i) \in \Ext^1(S,M(C)).
\]

\begin{lem}
The middle term $E_{C,i}$ of the exact sequence $\xi_{C,i}$ is isomorphic to $M(C)$.
\end{lem}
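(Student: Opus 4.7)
The strategy is to construct an explicit isomorphism $\psi \colon M(C) \to E_{C,i}$. With $\iota(M(C)) \subseteq E_{C,i}$ identified with $M(C)$, the element $e \in E_{C,i}$ coming from the pushout satisfies $e \in e_v E_{C,i}$ and $f(P) e = b_i$; the plan is to treat $e$ as a new ``top'' of the $P$-orbit at vertex $v$, and to verify that the resulting basis change is consistent with the string module structure of $M(C)$. This mirrors the toy model $\Lambda = k[x]$, $M = k[x]$, $S = k[x]/(f(x))$, where the extension middle term is again $k[x]$ with the old $M$ sitting inside as $f(x)k[x]$.

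Assume without loss of generality that $d_i(C,\epsilon)=1$, so that $C_{\leq i}$ is the infinite leftward concatenation of copies of $P = x_1 x_2 \cdots x_p$. First I would define $\psi$ on the basis elements $\{b_j : j \leq i\}$ by setting $\psi(b_i) = e$ and extending by $\Lambda$-equivariance along $C_{\leq i}$: concretely,
\[
\psi(b_{i-k-np}) = x_{p-k+1} x_{p-k+2} \cdots x_p \cdot P^n e
\]
for $0 \leq k < p$ and $n \geq 0$, so in particular $\psi(b_{i-np}) = P^n e$. This is consistent because the $\Lambda$-action on the $P$-orbit of $b_i$ inside $M(C)$ matches the $\Lambda$-action on the $P$-orbit of $e$ inside $E_{C,i}$, since $P b_{i-np} = b_{i-(n+1)p}$ in $M(C)$ while $P \cdot P^n e = P^{n+1}e$ in $E_{C,i}$.

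Next I would extend $\psi$ to the remaining basis elements $\{b_j : j > i\}$ by induction on $j$, adding ``correction terms'' living in the new part of $E_{C,i}$ spanned by elements $x_{k_1} \cdots x_{k_r} \cdot P^n e$. As the prototypical case, if $C_{i+1} = x_1$ (the only possible direct letter here, by the sign conditions of the string algebra, and which the $P$-peak forces to break from $P$ soon after), the naive choice $\iota(b_{i+1})$ gives $x_1 \iota(b_{i+1}) = \iota(b_i) = f(P)e = e + Pg(P)e \neq e$, so I set
\[
\psi(b_{i+1}) = \iota(b_{i+1}) - x_2 x_3 \cdots x_p \cdot g(P) \cdot e,
\]
which gives $x_1 \psi(b_{i+1}) = \iota(b_i) - Pg(P)e = f(P)e - Pg(P)e = e = \psi(b_i)$, as required. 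Similar corrections are propagated inductively to $\psi(b_{i+2}), \psi(b_{i+3}), \ldots$ following the word $C_{>i}$. If instead $C_{i+1}$ is an inverse letter, no correction is needed at $i+1$ and we may take $\psi(b_{i+1}) = \iota(b_{i+1})$.

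The main obstacle will be verifying that the inductive construction for $j > i$ is well-defined and consistent for all such $j$. The $P$-peak hypothesis is indispensable here: it forces $C_{>i}$ to eventually break away from matching $P$, so the corrections do not cascade indefinitely and remain expressible as finite linear combinations of the $x_{k_1} \cdots x_{k_r} P^n e$. Once these corrections are verified, $\psi$ is a $\Lambda$-module map by construction, and its bijectivity follows from the fact that at vertex $v$ the change of basis between $\{\iota(b_{i-np}) : n \geq 0\}$ and $\{P^n e : n \geq 0\}$ is given by the triangular relation $\iota(b_{i-np}) = f(P) P^n e = P^n e + c_1 P^{n+1} e + \cdots + c_{\deg f} P^{n+\deg f} e$, combined with identity-like behavior on the remaining basis elements.
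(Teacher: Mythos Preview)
Your construction is correct and can be completed to a valid proof, but it differs from the paper's approach. Rather than building $\psi:M(C)\to E_{C,i}$ directly, the paper constructs an endomorphism $\phi$ of $M(C)$---in your normalization, $\phi(b_k)=b_{k-p}$ for $k\le j$ (where $j$ is maximal with $C_{\le j}$ direct) and $\phi(b_k)=0$ otherwise---and checks that $f(\phi)$ is injective with cokernel isomorphic to $S$, yielding a second short exact sequence $\eta_{C,i}:0\to M(C)\xrightarrow{f(\phi)}M(C)\to S\to 0$. A Snake Lemma diagram chase then shows $\xi_{C,i}=\eta_{C,i}\,a$ for some nonzero $a\in\End(S)$; since $\End(S)$ is a field, $a$ is invertible and the middle terms are isomorphic. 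The two routes are in fact closely related: your $\psi$ satisfies $\psi\circ f(\phi)=\iota$, and invoking the five-lemma on this identity would establish bijectivity more cleanly than the triangular change-of-basis sketch you give. What your approach buys is an explicit formula for the isomorphism; what the paper's approach buys is a uniform homological argument that sidesteps the inductive correction procedure and the case analysis on $C_{>i}$. One small sharpening of your outline: the $P$-peak hypothesis forces $C_{>i}$ to deviate from $P$ within the first $p-1$ letters (not merely ``eventually''), so your corrections stop at some $b_{i+r}$ with $r<p$, after which $\psi(b_j)=\iota(b_j)$ works with no further adjustment.
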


\begin{proof}
We define $\phi\in \End(M(C))$
as follows.
If $d(C,-\epsilon)=1$, so that $C_{>i} = (P^{-1})^\infty$,
let $j$ be minimal with $C_{>j}$ an inverse word.
Since $i$ is a $P$-peak for $C$, we have $i-p < j \le i$, where $p$ is the length of $P$.
We define $\phi(b_k) = b_{k+p}$ for $k\ge j$ and $\phi(b_k)=0$ for $k<j$.
Dually, if $d_i(C,-\epsilon) = -1$, so that $(C_{\le i})^{-1} = (P^{-1})^\infty$,
let $j\in I$ be maximal such that $(C_{\le j})^{-1}$ is an inverse word,
and define $\phi(b_k) = b_{k-p}$ for $k\le j$
and $\phi(b_k)=0$ for $k>j$.

It is straightforward to see that
$f(\phi)$ is an injective endomorphism of $M(C)$ with cokernel isomorphic to $S$.
We fix an isomorphism between $S$ and the cokernel of $f(\phi)$, and hence obtain an exact sequence
\[
\eta_{C,i} : 0 \to M(C) \xrightarrow{f(\phi)} M(C) \xrightarrow{g} S\to 0.
\]
Let $M=M(C)$. The exact sequences above lead to a commutative diagram with exact rows and columns

\[
\begin{CD}
& & 0 & & 0 & & 0
\\
& & @VVV @VVV @VVV
\\
0 @>>> \Hom(S,M) @>>> \Hom(S,M) @>>> \End(S)
\\
& & @VVV @VVV @VVV
\\
0 @>>> e_v M @>f(\phi)>> e_v M @>g>> e_v S @>>> 0
\\
& & @Vf(P)VV @Vf(P)VV @Vf(P)VV
\\
0 @>>> e_v M @>f(\phi)>> e_v M @>g>> e_v S @>>> 0
\\
& & @V\alpha_M VV @V\alpha_M VV @V\alpha_S VV
\\
& & \Ext^1(S,M) @>>> \Ext^1(S,M) @>>> \Ext^1(S,S)
\\
& & @VVV @VVV @VVV
\\
& & 0 & & 0 & & 0.
\end{CD}
\]

\noindent
The Snake Lemma gives a connecting map $c:\End(S)\to \Ext^1(S,M)$ sending $a\in\End(S)$ to $\eta_{C,i}a$.
Now $f(\phi) b_i = f(P) b_i$ so by the diagram chase defining the connecting map
there is $a\in\End(S)$ with $c(a) = \alpha_M(b_i)$.
Moreover $a\neq 0$ since $b_i\notin f(\phi) M$, so $g(b_i)\neq 0$.
Then $\eta_{C,i}a = \alpha_M(b_i) = \xi_{C,i}$, so
there is map of exact sequences
\[
\begin{CD}
\xi_{C,i}: &0 @>>> M(C) @>>> E_{C,i} @>>> S  @>>> 0
\\
&& & @| @VVV @V a VV
\\
\eta_{C,i}: &0 @>>> M(C) @>>> M(C) @>>> S  @>>> 0
\end{CD}
\]
and since $a$ is an isomorphism, $E_{C,i}\cong M(C)$.
\end{proof}

\begin{lem}
\label{l:stringextbasis}
If $C$ is a word which is not equivalent to ${}^\infty P^\infty$,
then the elements $\xi_{C,i}$ with $i$ a $P$-peak for $C$,
form an $\End(S)$-basis for $\Ext^1(S,M(C))$.
\end{lem}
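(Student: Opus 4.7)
The plan is to use the projective resolution from Lemma~\ref{l:projres} to identify $\Ext^1(S, M(C))$ with a quotient of $e_v M(C)$, analyze this quotient via the $P$-orbit decomposition, and finally verify that the $\End(S)$-action coincides with the natural action of $k[P]/(f(P))$.

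Applying $\Hom(-, M(C))$ to the resolution and using $\Hom(\Lambda e_v, M(C)) \cong e_v M(C)$ yields
\[
\Ext^1(S, M(C)) \cong e_v M(C)/f(P)\, e_v M(C),
\]
with $\xi_{C,i}$ corresponding to the class $[b_i]$. I would decompose $e_v M(C)$ into its $P$-orbits, giving a $k[P]$-module decomposition preserved by $f(P)$. Using the hypothesis $C \not\sim {}^\infty P^\infty$, no orbit can be doubly infinite, since a doubly infinite $P$-orbit at some $b_i$ would force $C$ to contain a doubly infinite run of $P$ or $P^{-1}$. So each orbit is one of: (i) finite, isomorphic to $k[P]/(P^N)$, on which $f(P)$ is a unit since $f(0) = 1$ and $P$ is nilpotent; (ii) infinite in the $P^{-1}$-direction but terminating in the $P$-direction (``Pr\"{u}fer-type''), on which $f(P) = 1 - Pg(P)$ is invertible with inverse $\sum_{m\ge 0}(Pg(P))^m$ computed elementwise, since $P$ is locally nilpotent on such an orbit; or (iii) infinite in the $P$-direction with a $P$-peak $b_i$ at the top, isomorphic to $k[P]$ generated by $b_i$, and contributing $k[P]/(f(P))$ with generator $[b_i] = \xi_{C,i}$. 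Only type (iii) contributes, giving $\Ext^1(S, M(C)) \cong \bigoplus_i k[P]/(f(P))\cdot \xi_{C,i}$ as a $k[P]$-module, with $i$ running over the $P$-peaks of $C$.

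Finally I would match the $\End(S)$-action. Via $\gamma_S$, $\End(S) \cong k[z]/(f(z))$, and Lemma~\ref{l:extswap} says that $z$ acts on $\Ext$ by pushforward of $\gamma_{M(C)}(z) = z_v = P + R$, where $R$ is the other primitive cycle at $v$ if one exists; if not, $z$ acts as $P$ and the conclusion is immediate. Otherwise it suffices to show $R$ acts as zero on $\Ext$, i.e., $R\cdot e_v M(C)\subseteq f(P)\,e_v M(C)$. If $R b_i = b_j\neq 0$, writing $R = r_1\cdots r_q$ and $P = p_1\cdots p_p$, then either $C_{i-q+1,\ldots,i}=R$ with $j = i-q$, or $C_{i+1,\ldots,i+q}=R^{-1}$ with $j = i+q$. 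In both cases I would deduce $P b_j = 0$: if not, we would have $C_{j-p+1,\ldots,j}=P$ or $C_{j+1,\ldots,j+p}=P^{-1}$, and combined with the forced $C_{j+1} = r_1$ (first case) or $C_j = r_1^{-1}$ (second case), this forces $p_p r_1$ or its inverse $r_1^{-1}p_p^{-1}$ to appear as consecutive letters of $C$. But $p_p r_1\in\rho$: the distinct first arrows of $P$ and $R$ give $r_1\neq p_1$, and the string algebra condition at the arrow $p_p$ combined with $p_p p_1\notin\rho$ then forces $p_p r_1\in\rho$. Hence $b_j$ lies in a type (i) or (ii) orbit, giving $[b_j] = 0$ in $\Ext$. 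The main obstacle is this last combinatorial argument, carefully ruling out each configuration in which $P$ could act nontrivially on $R b_i$.
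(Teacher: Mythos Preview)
Your proof is correct and follows essentially the same approach as the paper's, which more tersely observes that $e_v M(C)$ decomposes as a $k[P]$-module into free summands $k[P]b_i$ (one per $P$-peak) plus a nilpotent torsion part---your types (i) and (ii) together---on which $f(P)$ is invertible. Your final combinatorial verification that $P b_j = 0$ whenever $b_j = R b_i$ is unnecessary: since $PR = 0$ already in $\Lambda$ (Section~4), one has $P(R b_i) = (PR)b_i = 0$ in any module.
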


\begin{proof}
Observe that $e_v M(C)$, as a $k[P]$-module, is the direct sum of free submodules $k[P]b_i$
where $i$ runs through the $P$-peaks, and a nilpotent torsion submodule spanned
by the $b_i$ with $v_i(C)=v$ and $i$ not $P$-deep.
Now the isomorphism $e_v M(C) / f(P) e_v M(C) \to \Ext^1(S,M(C))$ induced by $\alpha_{M(C)}$
gives the result.
\end{proof}

Let $\Sigma$ be a set of representative of the equivalence classes of words.

\begin{dfn}
We define a \emph{$P$-class} to be a pair $(C,i)$
where $C\in\Sigma$ and $i$ is a $P$-peak for $C$,
The set of $P$-classes is totally ordered by
$(C,i) > (D,j)$ if $C(i,\epsilon) > D(j,\epsilon)$.
\end{dfn}

Henceforth, we write $b_i^C$ instead of $b_i$ for the basis elements of $M(C)$, so as to identify the word $C$.

\begin{lem}
\label{l:peakhom}
Suppose that $(C,i) > (D,j)$ are $P$-classes.
Then there is a homomorphism $\theta_{ij} : M(C)\to M(D)$
such $(\theta_{ij})_*(\xi_{C,i}) = \xi_{D,j}$.
Moreover, if $C=D$ then $\theta_{ij}^2 = 0$.
\end{lem}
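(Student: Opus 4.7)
I plan to construct $\theta_{ij}$ as an explicit graph map between string modules, then verify the homomorphism property and derive the rest.

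Setup: Both $(C,i)$ and $(D,j)$ being $P$-classes gives $C(i,-\epsilon) = D(j,-\epsilon) = (P^{-1})^\infty$, so I identify basis elements of $M(C)$ along the $(P^{-1})^\infty$ tail of $b_i^C$ with basis elements of $M(D)$ along the tail of $b_j^D$, position by position. The ordering $C(i,\epsilon) > D(j,\epsilon)$ in $\W_{v,\epsilon}$ forces these words to share a common prefix $B$ (finite in case (a), equal to all of $D(j,\epsilon)$ in case (b), equal to all of $C(i,\epsilon)$ in case (c)); match basis elements along $B$ similarly. Define $\theta_{ij}$ to send matched basis elements of $M(C)$ to their partners in $M(D)$, and to be zero on all other basis elements.

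The verification that $\theta_{ij}$ is a $\Lambda$-homomorphism reduces to checking arrow actions at the boundary of the common prefix. The crucial asymmetry of the ordering (direct letter in the smaller, inverse letter in the larger) ensures that the matching is closed under the potentially problematic arrow actions: in case (a), with $C(i,\epsilon) = Bx^{-1}E'$ and $D(j,\epsilon) = ByE$, the inverse letter $x^{-1}$ after $B$ in $C$ means that the arrow $x$ in $M(C)$ moves the boundary basis element outward (into the unmatched region) rather than inward, and no arrow pulls an unmatched basis element to a matched one. The sign axiom of a string algebra enters via the sign alternation of consecutive letters, forcing $x^{-1}$ and $y$ to have the same sign and hence (since they are distinct letters with common head) $xy\in\rho$; this also constrains the last letter $B_n$ of $B$ so that boundary arrows behave compatibly. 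Cases (b) and (c) are handled analogously, with the finite termination of one of the two words providing a natural cutoff. The pushforward identity is then immediate from $\theta_{ij}(b_i^C) = b_j^D$:
\[
(\theta_{ij})_*(\xi_{C,i}) = (\theta_{ij})_*\alpha_{M(C)}(b_i^C) = \alpha_{M(D)}(\theta_{ij}(b_i^C)) = \alpha_{M(D)}(b_j^D) = \xi_{D,j}.
\]

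For the nilpotence $\theta_{ij}^2 = 0$ when $C = D$: a $\Z$-word has at most two $P$-peaks, and if two then their $(P^{-1})^\infty$ tails extend in opposite index-directions in $C$ (otherwise, by unwinding $C(j,\epsilon) = (C_{\le j})^{-1}$, one verifies $C(j,\epsilon)$ would be of the form $P^\infty$, contradicting $j$ being a $P$-peak). Consequently, the matched region around $i$ (the support of $\theta_{ij}$) and the matched region around $j$ (where the image of $\theta_{ij}$ lies) occupy opposite halves of $C$, meeting only in a possibly small finite middle region; the construction is arranged so that $\theta_{ij}$ vanishes on its image, giving $\theta_{ij}^2 = 0$. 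The main obstacle is the boundary verification in the homomorphism check, which requires case analysis on (a), (b), (c) and uses the sign axiom to pin down the structure of $B_n$; and, in the $C = D$ case, the careful handling of potential overlap of the two matched regions when the two peaks are close relative to the common prefix length.
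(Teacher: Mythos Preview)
Your construction is the same as the paper's: a graph map matching basis elements along the common $(P^{-1})^\infty$ tail and along the maximal common prefix $B$ (of length $r$) of $C(i,\epsilon)$ and $D(j,\epsilon)$, zero elsewhere. The boundary check and the pushforward identity are as you describe.

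The gap is in the nilpotence argument when $C=D$. You correctly observe that the two $(P^{-1})^\infty$ tails point in opposite index-directions, so the word joining the peaks is some finite $E$ of length $\lvert i-j\rvert$ with $C(i,\epsilon)=E(P^{-1})^\infty$ and $C(j,\epsilon)=E^{-1}(P^{-1})^\infty$. The support of $\theta_{ij}$ then extends from the $i$-side a distance $r$ past $i$ toward $j$, while the image sits on the $j$-side and reaches a distance $r$ back toward $i$; these are disjoint exactly when $\lvert E\rvert>2r$. Your sentence ``the construction is arranged so that $\theta_{ij}$ vanishes on its image'' is not an argument: the map is completely determined by the matching, and nothing you have said rules out $\lvert E\rvert\le 2r$. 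The paper closes this with Lemma~\ref{l:symmetry}: since $r$ is maximal with $E_{\le r}=(E^{-1})_{\le r}$, having $\lvert E\rvert\le 2r$ would force $E=E^{-1}$, which that lemma forbids. Hence $E=BFB^{-1}$ with $\lvert F\rvert\ge 1$, and every basis element in the image of $\theta_{ij}$ lies outside the support. You flagged ``potential overlap \dots\ when the two peaks are close relative to the common prefix length'' as an obstacle, but you need to actually exclude it, and the no-symmetry lemma is the missing ingredient.
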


\begin{proof}
By assumption $C(i,\epsilon) > D(j,\epsilon)$.
Let $r$ be maximal with
\[
C(i,\epsilon)_{\le r} = D(j,\epsilon)_{\le r} = B,
\]
say. Then $C(i,\epsilon)_{r+1}$ is an inverse letter and $D(j,\epsilon)_{r+1}$ is a direct letter
(or one of them is absent if the relevant word $C(i,\epsilon)$ or $D(j,\epsilon)$ has length~$r$).
Let $c = d_i(C,\epsilon)$ and $d = d_j(D,\epsilon)$.
We define
\[
\theta_{ij}(b_k^C) =
\begin{cases}
b_{j-c d (i-k)}^D & (c(i-k)\ge -r)
\\
0 & (c(i-k) < -r).
\end{cases}
\]
Then $\theta_{ij}$ is is a homomorphism from $M(C)$ to $M(D)$ sending $b_i^C$ to $b_j^D$.
Thus
\[
(\theta_{ij})_*(\xi_{C,i}) = (\theta_{ij})_*(\alpha_{M(C)}(b_i^C)) = \alpha_{M(D)}(b_j^D) = \xi_{D,j}.
\]
Now suppose that $C=D$.
Then $C(i,\epsilon)$ is of the form $E (P^{-1})^{\infty}$
and $C(j,\epsilon)$ is of the form $E^{-1} (P^{-1})^{\infty}$,
where $E$ has length $\lvert i-j\rvert$.
Then $E > E^{-1}$ and $r$ is maximal with $E_{\le r} = (E^{-1})_{\le r}$.
Then Lemma~\ref{l:symmetry} implies that $E$ has length $> 2r$, and that $E = B F B^{-1}$
for some word $F$ of length $\ge 1$ whose first and last letters are inverse.
But then the basis elements $b_k^C$ in the image of $\theta_{ij}$ are all sent to zero by~$\theta_{ij}$.
\end{proof}

\begin{lem}
\label{l:extendband}
Let $M = M(D,U)$ be a finite-dimensional band module.
\begin{itemize}
\item[(i)]
If $D$ is not equivalent to ${}^\infty P^\infty$ then $\Ext^1(S,M) = 0$.
\item[(ii)]
If $D$ is equivalent to ${}^\infty P^\infty$ then $\Ext^1(S,M)$ has dimension $\le 1$ as a
vector space over $\End(S)$.
\item[(iii)]
If $\Ext^1(S,M)\neq 0$ and $(C,i)$ is a $P$-class,
%$C$ is a word and $i$ is a $P$-peak for $C$,
then $\psi_*(\xi_{C,i})\neq 0$ for some homomorphism $\psi:M(C)\to M$.
\end{itemize}
\end{lem}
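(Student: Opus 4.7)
The plan is to apply $\Hom(-,M)$ to the projective resolution of $S$ from Lemma~\ref{l:projres} to obtain the exact sequence
\[
0 \to \Hom(S,M) \to e_v M \xrightarrow{f(P)} e_v M \xrightarrow{\alpha_M} \Ext^1(S,M) \to 0,
\]
identifying $\Ext^1(S,M)$ with the cokernel of multiplication by $f(P)$ on $e_v M$. I then analyse this cokernel via the explicit action of $P$ on the band module $M = M(D,U)$: a direct computation shows $P(b_j^D \otimes u)$ equals $b_{j-p}^D \otimes u$ when the letters $D_{j-p+1}\cdots D_j$ form $P$, equals $b_{j+p}^D \otimes u$ when $D_{j+1}\cdots D_{j+p}$ form $P^{-1}$, and vanishes otherwise.

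For (i), if $D \not\sim {}^\infty P^\infty$ then periodicity of $D$ rules out arbitrarily long substrings of $P$ or $P^{-1}$ in $D$, so $P$ is nilpotent on the finite-dimensional $e_v M$. Writing $f(z) = 1 - z g(z)$ (valid as $f(0)=1$), the operator $f(P) = 1 - P g(P)$ is then invertible, so $\Ext^1(S,M) = 0$. For (ii), after replacing $D$ by ${}^\infty P^\infty$ up to equivalence, I decompose $e_v M = \bigoplus_{0\le j<p,\ v_j(D) = v} V_j$ with $V_j = b_j^D \otimes U$ and check that $P$ acts as $T$ on $V_0$ (since $P b_0^D = b_{-p}^D = T b_0^D$) and as zero on the remaining $V_j$. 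Thus $f(P)$ acts as $f(T)$ on $V_0$ and as the identity on the other summands, giving $\Ext^1(S,M) \cong U/f(T)U$; with $U = k[T,T^{-1}]/(g(T)^k)$, this quotient is nonzero only when $f = g$ up to a unit, and in that case it is cyclic of dimension one over $\End(S) \cong k[T,T^{-1}]/(f)$.

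For (iii), the hypothesis combined with (i)-(ii) forces (up to equivalence) $D = {}^\infty P^\infty$ and $U = k[T,T^{-1}]/(f^k)$, and then $u_0 := b_0^D \otimes 1$ satisfies $\alpha_M(u_0) \ne 0$. Given a $P$-class $(C,i)$, replacing $C$ by $C^{-1}$ if needed I assume $C(i,-\epsilon) = C_{>i} = (P^{-1})^\infty$. A short periodicity argument shows that if $C$ were a periodic $\Z$-word then $(C_{\le i})^{-1}$ would start with $P$, contradicting the $P$-peak condition at $i$; hence $C$ is not periodic and Lemma~\ref{l:mapstring} applies. Setting $B = C(i,\epsilon)$ and $D' = C(i,-\epsilon)$, the lemma says every $m \in F_{B,D'}^+(M) = B^+(M) \cap D'^+(M)$ is realised as $\psi(b_i^C)$ for some $\psi \colon M(C) \to M$, giving $\psi_*(\xi_{C,i}) = \alpha_M(m)$. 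Since $D'$ is repeating with underlying linear relation $P^{-1}$ (the inverse of an everywhere-defined operator on $e_v M$), one verifies $D'^+(M) = e_v M$, so $F_{B,D'}^+(M) = B^+(M)$. It therefore suffices to exhibit $u_0 \in B^+(M)$.

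The main obstacle is establishing $u_0 \in B^+(M)$. The $P$-peak condition ensures $B = (C_{\le i})^{-1}$ does not start with $P$, and the sign convention in the string algebra (requiring $C_i$ to have inverse of sign opposite to $x_p^{-1}$) restricts $C_i$ to be either $x_1^{-1}$ or a direct arrow with tail $v$ that is not a letter of $P$. In the first case, $b_1^D \otimes 1 \in V_1$ is a genuine preimage of $u_0$ under $x_1$ in $M$; in the second case the arrow acts as zero on $M$, so its formal inverse yields all of $M$ as a preimage space. Propagating these choices through the successive letters of $B$---using a parallel sign analysis at each step to verify that the constraints for the word $B$ never require a preimage which fails to exist in $M$---produces an element of $B^+(M)$ whose trajectory under $B$ ends at $u_0$, and then $\psi_*(\xi_{C,i}) = \alpha_M(u_0) \ne 0$ completes the proof.
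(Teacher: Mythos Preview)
Your arguments for (i) and (ii) are correct and essentially identical to the paper's: both identify $\Ext^1(S,M)$ with the cokernel of $f(P)$ on $e_v M$, observe that $P$ is nilpotent on $e_v M$ when $D\not\sim{}^\infty P^\infty$, and for $D={}^\infty P^\infty$ split off the copy $V_0$ of $U$ on which $P$ acts as $T$.

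For (iii) your strategy agrees with the paper's: produce $\psi:M(C)\to M$ with $\psi(b_i^C)=u_0=b_0^D\otimes\overline 1$ and conclude $\psi_*(\xi_{C,i})=\alpha_M(u_0)\neq 0$. The paper does this by first asserting a map $M(C)\to M(D)$ sending $b_i^C\mapsto b_0^D$ and then tensoring down to $M(D,U)$; you instead invoke (the proof of) Lemma~\ref{l:mapstring}. Both routes reduce to the same verification, namely that $u_0\in B^+(M)\cap D'^+(M)$ where $B=C(i,\epsilon)$ and $D'=(P^{-1})^\infty$. Your treatment of $D'^+(M)=e_vM$ is fine, and your periodicity digression is correct (though the fact you actually use from Lemma~\ref{l:mapstring} does not require it).

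The gap is in your last paragraph. The claim that the arrow $a$ (when $C_i=a$ is direct) ``is not a letter of $P$'' and ``acts as zero on $M$'' is not justified and need not hold: $P$ may revisit $v$, so $a$ could equal some $x_j$ with $j<p$ and then $a$ is not zero on $M$. What you actually need, and what is true, is only that $a u_0=0$; this holds because $a\neq x_p$ (as $ax_1\in\rho$ while $x_px_1\notin\rho$) and $D_0=x_p$, $D_1=x_1$ force $a(b_0^D\otimes 1)=0$. More seriously, the inductive ``propagation through the successive letters of $B$'' is only sketched; carrying it out requires at each step the same dichotomy (either $B_k=x_k$ and one takes $m_k=b_k^D\otimes 1$, or $B_k$ is inverse and one checks the corresponding arrow kills $m_{k-1}$, whence $m_k=0$), together with a separate check of the terminal condition when $B$ is finite. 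This can all be done, but it is much cleaner to bypass it entirely via Lemma~\ref{l:bandeval}: one has $u_0\in B^+(M)$ if and only if $D(0,\epsilon)\le B$, and $D(0,\epsilon)=P^\infty$. The $P$-peak condition says $B$ does not begin with $P$, so the first index $r<p$ at which $B$ and $P^\infty$ differ has $B_{r+1}$ inverse (or $B$ of length $r$), giving $P^\infty<B$ directly from the definition of the order. This replaces your entire final paragraph with two lines.
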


\begin{proof}
(i) The projective resolution of $S$ realizes $\Ext^1(S,M)$ as the cokernel of
the map $f(P)$ from $e_v M$ to $e_v M$.
If $D$ is not equivalent to ${}^\infty P^\infty$ then there are no $P$-deep basis
elements for $D$. It follows that each element of $e_v M$ is killed by a power of $P$,
so $f(P)$ acts invertibly on $e_v M$.

(ii) We may assume that $D = {}^\infty P^\infty$.
We have $M = U_0 \oplus U_1 \oplus \dots \oplus U_{p-1}$ using the notation
preceding Lemma~\ref{l:bandeval}, where $p$ is the length of $P$.
Now as a $k[P]$-module, $e_v M$ is isomorphic to the direct sum of
$U_0$, which is a copy of $U$
with $P$ acting as $T$, and a nilpotent torsion submodule, spanned by
the other $U_i$ with $U_i = e_v U_i$.
Thus
\[
\Ext^1(S,M) \cong e_v M/f(P)M \cong U/f(T)U \cong \Ext^1(V,U).
\]
Since $U$ is an indecomposable $k[T,T^{-1}]$-module and $V$ is simple,
this has dimension $\le 1$ as a module for $\End(V)\cong \End(S)$.

(iii) We may assume we are in case (ii).
Then $\Ext^1(V,U)\neq 0$,
so we can identify $U = k[T]/(f(T))^r$ for some $r>0$.
There is a homomorphism $M(C)\to M(D)$ sending $b_i^C$ to $b_0^D$.
It induces a homomorphism $\psi:M(C)\to M$ sending
$b_i^C$ to $m = b_0^D \otimes \overline 1 \in e_v M$, and
\[
\psi_*(\xi_{C,i}) = \psi_*(\alpha_{M(C)}(b_i^C)) = \alpha_M(\psi(b_i^C)) = \alpha_M(m).
\]
This is non-zero since $m\notin f(P)M$,
which follows from the observation in (ii)
about the $k[P]$-module structure of $e_v M$,
as we can identify $m$ with the element $\overline 1\in U_0$.
\end{proof}

\begin{proof}[Proof of Theorem \ref{t:extension}]
Letting $i_\lambda$ denote the inclusion of $N_\lambda$ in $N$,
we can write the class $\zeta\in \Ext^1(S,N)$ of the extension
\[
0\to N\to M\to S\to 0
\]
as
\[
\zeta = \sum_{\lambda\in \Phi} (i_\lambda)_*(\zeta_\lambda)
\]
for elements $\zeta_\lambda\in \Ext^1(S,N_\lambda)$, all but finitely many zero.

If $N_\lambda$ is a string module, since equivalent words give isomorphic string modules,
we may assume that it is of the form $M(C^\lambda)$
with $C^\lambda\in \Sigma$, our chosen set of representative of the equivalence classes of words,
and by Lemma~\ref{l:stringextbasis} we can write
\[
\zeta_\lambda = \sum_i \xi_{C^\lambda,i} a_{\lambda i}
\]
where $i$ runs through the $P$-peaks for $C^\lambda$ and $a_{\lambda i}\in\End(S)$.

There must be at least one string module $N_\lambda$ with $\zeta_\lambda\neq 0$,
for otherwise, by Lemma~\ref{l:extendband}, $S$ only extends band
modules which are primitive torsion, so there is a primitive torsion submodule
of $M$ mapping onto $S$, contradicting the assumption that $\tau^1(M)\subseteq N$.
Among all pairs $(\lambda,i)$ where $N_\lambda$ is a string module $M(C^\lambda)$,
$i$ is a $P$-peak for $C^\lambda$ and $a_{\lambda i}\neq 0$,
choose a pair $(\lambda,i)$ for which the $P$-class $(C^\lambda,i)$ is maximal. We denote it $(\mu,j)$.

Suppose that $N_\lambda$ is a band module and $\zeta_\lambda \neq 0$.
By Lemma~\ref{l:extendband}
there is a map $\theta_\lambda : N_\mu\to N_\lambda$ such that
$(\theta_\lambda)_*(\xi_{C^\mu,j})\neq 0$.
Then by Lemma~\ref{l:extswap} and Lemma~\ref{l:extendband}(ii)
there is $\psi_\lambda\in\End(N_\mu)$ such that
$\phi_\lambda = \psi_\lambda \theta_\lambda$ satisfies
$(\phi_\lambda)_*(\xi_{C^\mu,j} a_{\mu j}) = \zeta_\lambda$.

Suppose that $N_\lambda$ is a string module and $\zeta_\lambda\neq 0$.
If $i$ is a $P$-peak for $C^\lambda$ with
$(\lambda,i) \neq (\mu,j)$ and $a_{\lambda i}\neq 0$, then by
the choice of $(\mu,j)$, by Lemma~\ref{l:peakhom}
(or trivially if $(C^\lambda,i)=(C^\mu,j)$),
there is a homomorphism $\theta_{\lambda i} : N_\mu\to N_\lambda$
such that $(\theta_{\lambda i})_*(\xi_{C^\mu,j}) = \xi_{C^\lambda,i}$.
By Lemma \ref{l:extswap} there is $\psi_{\lambda i}$ in the centre of $\End(N_\lambda)$
such that $(\psi_{\lambda i}\theta_{\lambda i})_*(\xi_{C^\mu,j} a_{\mu j}) =\xi_{C^\lambda,i} a_{\lambda i}$.
We define $\phi_\lambda:N_\mu\to N_\lambda$ by
\[
\phi_\lambda =
\begin{cases}
\sum_i \psi_{\lambda i}\theta_{\lambda i}
&
\text{(if $\lambda\neq\mu$)}
\\
1 + \sum_i \psi_{\lambda i}\theta_{\lambda i}
&
\text{(if $\lambda =\mu$)}
\end{cases}
\]
where $i$ runs through the $P$-peaks for $C^\lambda$ (with $i\neq j$ in case $\lambda=\mu$,
so the second sum has at most one term).
It follows that
$(\phi_\lambda)_*(\xi_{C^\mu,j} a_{\mu j}) = \zeta_\lambda$.
Observe that $\phi_\mu$ is invertible
since $\psi_{\mu i}$ is in the centre of $\End(N_\mu)$, so
$
(\psi_{\mu i}\theta_{\mu i})^2 = \psi_{\mu i}^2 \theta_{\mu i}^2 = 0
$
by Lemma \ref{l:peakhom}.

Now consider the pullback diagram
\[
\begin{CD}
\xi_{C^\mu,j}a_{\mu j}: &0 @>>> N_\mu @>p>> E @>q>> S  @>>> 0
\\
&& & @| @VrVV @Va_{\mu j}VV
\\
\xi_{C^\mu,j}: &0 @>>> N_\mu @>>> N_\mu @>>> S  @>>> 0.
\end{CD}
\]
Since $a_{\mu j}$ is an isomorphism, so is $r$.
The map $\phi = \sum_\lambda i_\lambda \phi_\lambda:N_\mu\to N$
satisfies $\phi_*(\xi_{C^\mu,j}a_{\mu j}) = \zeta$, so there is a
pushout diagram
\[
\begin{CD}
\xi_{C^\mu,j}a_{\mu j}: &0 @>>> N_\mu @>p>> E @>q>> S  @>>> 0
\\
&& & @V \phi VV @V t VV @|
\\
\zeta: &0 @>>> N @>>> M @>>> S  @>>> 0.
\end{CD}
\]
Since $\phi_\mu$ is invertible,
$\phi$ is a split monomorphism and $N = N'\oplus\Ima(\phi)$.
It follows that $M = N'\oplus \Ima(t)$
and $\Ima(t)\cong E\cong N_\mu$.
\end{proof}

\section{Proofs of the main results}
\label{s:main}
Theorem~\ref{t:mainartinian} has already been proved in \S\ref{s:cover}.

\begin{proof}
[Proof of Theorem~\ref{t:props}]
It is known that string modules are indecomposable:
see Krause~\cite{K} for a special case and \cite[\S 1.4]{CBinfdim} in general.
If $M(C,V)$ is a finite-dimensional or primitive injective band module, then it is artinian, so if it were to decompose, 
each of the summands would be a direct sum of string and band modules by Theorem~\ref{t:mainartinian}.
But then Theorem~\ref{t:summands} ensures that string module summands and other bands do not occur, and
gives a decomposition of $V$. But since $M(C,V)$ is a band module, $V$ is indecomposable.
The statement about isomorphisms follows from Theorem~\ref{t:summands}.
\end{proof}

\begin{proof}[Proof of Theorem \ref{t:main}]
We may suppose that $Q$ is connected.
Theorem~\ref{t:existdecomp} and Lemma~\ref{l:sbmono} 
give a submodule $N$ of $M$, such that
\[
N = \bigoplus_{\lambda\in\Phi} N_\lambda,
\]
a direct sum of string and finite-dimensional band modules. Moreover
$N$ contains $\tau^1(M)$ by Lemma~\ref{l:contprimtor},
and $L=M/N$ is primitive torsion by Lemma~\ref{l:cokprop}.

Since $Q$ is connected it has only countably many vertices,
and since $L$ is finitely controlled and primitive torsion,
$e_v L$ is finite-dimensional for all $v$. It follows that
we can write $L$ as a union $L = \bigcup L_j$
of a finite or infinite sequence of submodules
\[
0=L_0 \subset L_1 \subset L_2 \subset \dots
\]
with the quotients $S_j = L_j/L_{j-1}$ being primitive simples.
Let $M_j$ be the inverse image of $L_j$ in $M$. Thus we have exact sequences
\[
0\to M_{j-1}\to M_j\to S_j\to 0
\]
with $M_0 = N$ and $M = \bigcup_n M_n$.

Let $N_{\lambda,0} = N_\lambda$.
By Theorem~\ref{t:extension} we can write $M_j = \bigoplus_{\lambda\in\Phi} N_{\lambda,j}$
for submodules $N_{\lambda,j} \cong N_\lambda$ and such that
$N_{\lambda,j} = N_{\lambda,j-1}$ unless $N_\lambda$ is isomorphic a string module $M(C)$
such that $C$ has a $P$-peak for some primitive cycle $P$ with $S_j$ supported at the head of $P$.

For any vertex $v$, only finitely many of the simples $S_j$ can be supported at $v$.
It follows that for each $\lambda$ there is some $j$ with
\[
N_{\lambda,j} = N_{\lambda,j+1} = N_{\lambda,j+2} = \dots.
\]
Defining $N_{\lambda,\infty} = N_{\lambda,j}$, it follows easily that $M = \bigoplus_{\lambda\in\Phi} N_{\lambda,\infty}$.
\end{proof}

\begin{proof}[Proof of Theorem~\ref{t:KRS}]
By Theorems~\ref{t:main} and~\ref{t:mainartinian}
the indecomposable summands are string and band modules.
The result thus follows from Theorem~\ref{t:summands} and the Krull-Remak-Schmidt
property for finite-dimensional or artinian $k[T,T^{-1}]$-modules.
\end{proof}

Finally, from Lemmas~\ref{l:kzfc} and~\ref{l:kzpa}, one easily obtains the following
characterization of direct sums of string and band modules which are finitely controlled or pointwise artinian.

\begin{pro}
If $M$ is a direct sum of string and finite-dimensional band modules, then $M$ is 
\begin{itemize}
\item[(i)]
finitely generated if and only if, for any string module $M(C)$ which occurs, $C$ and $C^{-1}$ are eventually inverse,
and the sum is finite;
\item[(ii)]
finitely controlled if and only if, for any string module $M(C)$ which occurs, $C$  and $C^{-1}$ are eventually inverse or right vertex-finite,
and, for every vertex $v$, only finitely many summands are supported at $v$.
\end{itemize}
\end{pro}

\begin{pro}
If $M$ is direct sum of string modules, finite-dimensional band modules and primitive injective band modules, then $M$ is 
\begin{itemize}
\item[(iii)]
artinian if and only if, for any string module $M(C)$ which occurs, $C$ and $C^{-1}$ are eventually direct,
and the sum is finite;
\item[(iv)]
pointwise artinian if and only if, for any string module $M(C)$ which occurs, $C$  and $C^{-1}$ are eventually direct or right vertex-finite,
and, for every vertex $v$, only finitely many summands are supported at $v$. 
\end{itemize}
\end{pro}

\section*{Acknowledgement}

The author is grateful to C.~Ricke for pointing out a gap in a proof in the first draft of this article posted to the Arxiv.
It is corrected using Lemma~\ref{l:sbmono}.

\frenchspacing

\end{document}